\documentclass[11pt,a4paper]{amsart}
\usepackage[T1]{fontenc}
\usepackage{lmodern}
\usepackage[top=2.5cm,bottom=2.5cm,left=2.5cm,right=2.5cm]{geometry}
\usepackage{amsmath,amssymb,amsthm,mathtools,mathrsfs}
\usepackage{microtype}
\usepackage{xcolor}
\usepackage{hyperref}
\hypersetup{colorlinks=true,linkcolor=blue!60!black,
 citecolor=green!45!black,urlcolor=green!45!black}
\numberwithin{equation}{section}
\theoremstyle{definition}
\newtheorem{theorem}{Theorem}[section]
\newtheorem{proposition}[theorem]{Proposition}
\newtheorem{lemma}[theorem]{Lemma}
\newtheorem{corollary}[theorem]{Corollary}
\newtheorem{remark}[theorem]{Remark}
\newtheorem{example}[theorem]{Example}
\newcommand{\PP}{\mathcal{P}}
\newcommand{\R}{\mathbb R}
\newcommand{\N}{\mathbb N}
\newcommand{\Hh}{\mathcal H}
\newcommand{\HH}{\mathcal H}
\newcommand{\LL}{\mathcal L}

\newcommand{\E}{\mathbb E}
\newcommand{\U}{\mathcal U}
\newcommand{\weak}{\rightharpoonup}
\newcommand{\res}{\mathbin{\vrule height 1.6ex depth 0pt width
0.13ex\vrule height 0.13ex depth 0pt width 1.3ex}}

\newcommand{\ip}[2]{\langle #1,#2\rangle}
\newcommand{\norm}[1]{\lVert #1\rVert}
\DeclareMathOperator{\supp}{spt}
\DeclareMathOperator{\dist}{dist}

\DeclareMathOperator{\spanop}{span}
\DeclareMathOperator{\tr}{tr}

\newcommand{\eps}{\varepsilon}
\renewcommand{\epsilon}{\varepsilon}
\renewcommand{\bar}{\overline}
\allowdisplaybreaks[2]
\title[Gaussian rigidity and width of measures]
 {Gaussian rigidity and width of measures,\\
 with an application to Preiss's theorem}
\date{}
\author[A. Pigati]{Alessandro Pigati}
\address{Department of Decision Sciences, Bocconi University, Via Roentgen 1, 20136 Milano, Italy.}
\email{alessandro.pigati@unibocconi.it}

\begin{document}
\begin{abstract}
Given $0\le m<n$ and a positive Radon measure $\mu$ on $\R^n$ with at most polynomial growth,
assume that it has finite $m$-dimensional \emph{Gaussian discrepancy}, defined as
$$\mathcal D(\mu):=\limsup_{R\to\infty}\Bigl[R^2\sup_{x\in\supp\mu}|G_R^\mu(x)-1|\Bigr],$$
where $G_R^\mu(x):=(4\pi R^2)^{-m/2}\int_{\R^n} e^{-|z-x|^2/(4R^2)}\,d\mu(z)$.
Provided that $\mu$ admits a flat blow-down $\HH^m\res V$ (an assumption that cannot be dropped entirely),
we prove that there exists a finite (smallest) $0\le\delta<\infty$, called the \emph{width} of $\mu$, such that
$$\supp\mu\subseteq \bar B_\delta(V+x_0),\quad \delta^2\le C(m,n)\mathcal{D}(\mu),$$
where $\bar B_\delta(V+x_0)$ denotes the closed $\delta$-neighborhood of a suitable translate of $V$.
Note that the exponent is optimal because of scale invariance.

In particular, when $\mathcal D(\mu)=0$, the support of $\mu$ is included in an $m$-dimensional affine plane.
We use this fact to give a simpler proof of Preiss's rectifiability theorem \cite[Theorem~5.6]{Preiss}:
while still based on Gaussian averages of $\mu$, this new proof avoids showing uniqueness (and hence conicality) of blow-downs of $m$-uniform measures, as well as studying Gaussian-weighted moments beyond second order.

The strategy of proof consists of an anisotropic blow-down of $\mu$, followed by an analysis of the resulting Young measure, using a rigidity lemma involving the heat semigroup.
\end{abstract}
\maketitle

\section{Introduction}

Given an integer $0\le m<n$ and a positive Radon measure $\mu$ on
$\R^n$, we consider its Gaussian averages $G_R^\mu(x)$, given at each center $x\in\R^n$ and scale $R>0$ by
\begin{equation}\label{eq:gaussian-measure}
 G_R^\mu(x):=(4\pi R^2)^{-m/2} \int_{\R^n} e^{-|z-x|^2/(4R^2)}\,d\mu(z),
\end{equation}
and assume that $G_R^\mu(x)<\infty$ for all $x\in\supp\mu$ and $R>0$.
The normalization factor gives $G_R^\mu(x)=1$ for all $x\in\supp\mu$ if $\mu$ is \emph{flat},
namely $\mu=\HH^m\res V$ for a suitable affine $m$-plane $V$,
or more generally if $\mu$ is \emph{$m$-uniform}, meaning that $\mu(B_r(x))=\omega_mr^m$ for all $x\in\supp\mu$ and $r>0$.

We then define the \emph{Gaussian discrepancy} to be
\begin{equation}\label{eq:defect-infinity}
 \mathcal D(\mu):=\limsup_{R\to\infty} R^2\sup_{x\in\supp\mu}|G_R^\mu(x)-1|.
\end{equation}

Using the notation
\[
 S_{x,r}\mu:=r^{-m}(z\mapsto(z-x)/r)_*\mu,
 \quad S_r\mu:=S_{0,r}\mu
\]
for the rescalings of $\mu$, recall that a \emph{blow-down} is a weak-* limit $\lim_{j\to\infty}S_{R_j}\mu$,
along a sequence of scales $R_j\to\infty$ (as usual, weak-* convergence is understood in duality with $C^0_c(\R^n)$).

In this article, we first prove the following rigidity result.
\begin{theorem}\label{thm:zero}
If $\mu$ has a flat blow-down $\Hh^m\res V$ and $\mathcal{D}(\mu)=0$, namely
\begin{equation}\label{eq:zero-defect}
 \sup_{x\in\supp\mu}|G_R^\mu(x)-1|=o(R^{-2})
 \quad\text{as }R\to\infty,
\end{equation}
then $\supp\mu$ is contained in an affine $m$-plane parallel to $V$.
\end{theorem}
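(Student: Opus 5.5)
We follow the strategy announced in the abstract: an anisotropic blow-down of $\mu$, then a Young-measure analysis driven by a heat-semigroup rigidity lemma. Write $\R^n=V\oplus V^\perp$, with coordinates $z=(y,w)$, and note that $V$ passes through the origin since it is a blow-down. Choose scales $R_j\to\infty$ with $S_{R_j}\mu\to\Hh^m\res V$.

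The first step is a weak bound on the normal spread. Suppose $\supp\mu$ is not contained in any plane parallel to $V$, and set
\[
\delta_j:=\sup\{|w(z)-w(z')|:\ z,z'\in\supp\mu\cap B_{LR_j}\}
\]
for a large fixed $L$. Flatness of the blow-down gives $\delta_j=o(R_j)$. We then consider the anisotropic rescaling
\[
T_j(y,w):=\bigl(y/R_j,\ (w-w_j)/\delta_j\bigr),
\]
where $w_j$ is a suitable center, applied to $R_j^{-m}\mu$. Tangentially the pushforwards still converge to Lebesgue measure on $V$. Their disintegration along $V$ therefore converges, in the sense of Young measures, to a family $\{\nu_y\}_{y\in V}$ of probability measures on $V^\perp$, each supported in a ball of radius $O(1)$. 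The normalization by $\delta_j$ makes the family nontrivial: not all $\nu_y$ are the same Dirac mass.

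The second step is to expand the Gaussian identity to second order. For $x=(y_0,w_0)\in\supp\mu$ we have
\[
e^{-|z-x|^2/(4R^2)}=e^{-|y-y_0|^2/(4R^2)}\bigl(1-|w-w_0|^2/(4R^2)+\dots\bigr).
\]
Since $\delta_j\ll R_j$, the hypothesis $G_R^\mu(x)-1=o(R^{-2})$ at all scales $R$ comparable to $R_j$ yields, after division by $\delta_j^2/R_j^2$, the following. The heat extension of the tangential density, corrected by the Gaussian average of the second moment $\int|w-w_0|^2\,d\nu_y(w)$, is constant up to $o(\delta_j^2/R_j^2)$. There is a point of care here. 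The tangential density fluctuation must itself be of order $\delta^2/R^2$ or smaller, so it has to be rescaled by the same factor and passes to a limit density perturbation $f$. Because we use all scales $R\in[R_j,LR_j]$, we obtain in the limit an identity of the form
\[
P_t f(y_0)-c\,P_t\Bigl(\int|w-w_0|^2\,d\nu_\cdot(w)\Bigr)(y_0)\cdot t^{-1}=\mathrm{const}
\]
for all $t\in[1,L^2]$ and all $(y_0,w_0)\in\supp\nu$. Here $P_t$ is the heat semigroup on $V$. Subtracting the identity at two points $w_0,w_0'$ above the same or nearby $y_0$ eliminates $f$.

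The third step, which I expect to be the main obstacle, is the rigidity lemma. We must show that such an identity, holding for every $(y_0,w_0)$ in the support of the Young measure, forces every $\nu_y$ to be a single Dirac mass at a point independent of $y$. Expanding $|w-w_0|^2=|w|^2-2w\cdot w_0+|w_0|^2$ shows that $P_t$ of the barycenter function $b(y)=\int w\,d\nu_y$, paired with $w_0$, is affine in the quantity $|w_0|^2$ along the support. I would first try the following. Pick $w_0$ maximizing a linear functional $e\cdot w$ over the compact support, and compare it with the minimizing point. Because these are extreme points, a maximum-principle argument for the heat equation, letting $L\to\infty$ so that all $t\ge1$ are available, forces $b$ to be constant and the variance of $\nu_y$ to vanish. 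This contradicts the normalization $\delta_j$. Hence $\supp\mu\cap B_{LR_j}$ lies within $o(\delta_j)$ of a plane parallel to $V$, and a final iteration over scales shows that the widths are in fact $0$.
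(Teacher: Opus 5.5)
Your outline has the paper's architecture (anisotropic blow-down, Young measure, second-order expansion, heat-semigroup identities), but both load-bearing steps are missing.

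\textbf{The rigidity step.} Step~3 is not a proof. The extreme-point/maximum-principle heuristic gives no control over how the fibers vary in $y$. In particular, it says nothing about the hard case, where every fiber is a Dirac mass $\delta_{h(y)}$ with $h$ a nonconstant map into a sphere. In the paper, excluding this case needs three ingredients.
\begin{itemize}
\item[(i)] The identities at \emph{all} scales $r>0$, whereas you keep only $t\ge1$. Dirac fibers come from $r^2A_r\to0$ in $L^1_{loc}$ as $r\to0$, using $\PP_sA_r=A_{\sqrt{r^2+s^2}}$ and $A_r\ge0$.
\item[(ii)] Base-point data that your proposal never produces: $\PP_rh(0)=0$, from $\E_R[z]\to b$, and $A_r(0)=M^2/(2r^2)$, from evaluating the identity at the limit point $(0,-b)$ coming from $0\in\supp\mu$. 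Together with Bessel's inequality and the semigroup property, these give $h\in H^1_{loc}$ with $\PP_r(|Dh|^2)(0)\le M^2/r^2$, and the correlation identity $\PP_r(h\cdot\PP_sh)(0)=M^2r^2/(r^2+s^2)$.
\item[(iii)] The argument of Lemma~\ref{lem:function}. One takes a blow-down $g$ of $h$ with $|g|=M$, $\PP_1g(0)=0$, $D(\PP_1g)(0)=0$ and $\PP_1(|Dg|^2)(0)\le M^2$. Then convexity of $K(\sigma)=\PP_{\sqrt\sigma}(|D\PP_{\sqrt{1-\sigma}}g|^2)(0)$ and $\int_0^1 2K(\sigma)\,d\sigma=M^2$ force $M=0$.
\end{itemize}
You offer no substitute for (ii) or (iii), so nothing in your argument rules out a nonconstant $h$.

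\textbf{Steps 1 and 2.} The claim that normalizing by $\delta_j$ makes the Young measure nontrivial is false. The measure $dy\otimes\nu_y$ only records where mass concentrates, whereas $\delta_j$ is a supremum over support points, which may carry asymptotically negligible mass. For the same reason, ``$\supp\mu\cap B_{LR_j}$ lies within $o(\delta_j)$ of a plane'' does not follow from any statement about the $\nu_y$. A contradiction has to be extracted at limits of support points, as the paper does at $(0,-b)$. So you would need your identity there, not only on $\supp(dy\otimes\nu_y)$.

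Moreover, without an a priori bound on the vertical coordinates your errors are uncontrolled:
\begin{itemize}
\item Points outside $B_{LR_j}$ have vertical spread not bounded by $\delta_j$. This spoils both the second-order expansion and the tightness of the rescaled fibers.
\item Your assertion that the tangential density fluctuation is $O(\delta_j^2/R_j^2)$ everywhere is unjustified. The hypothesis only controls $G_R^\mu$ at support points; the paper needs the net property \eqref{eq:projection-net} and an interpolation argument to reach \eqref{eq:net-interpolation}.
\end{itemize}

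\textbf{The missing idea} is Lemma~\ref{lem:tube}. A Taylor expansion of $G_R^\mu(x)-G_R^\mu(0)$ at each \emph{fixed} $x\in\supp\mu$, as $R\to\infty$, yields $|\pi_{V^\perp}x-b|=|b|$ on $\supp\mu$. This makes your vertical renormalization unnecessary, since one blows down only horizontally. It also puts every fiber on a fixed sphere, so that the identity becomes $2r^2A_r=M^2-u\cdot\PP_rh$. Finally, it supplies the base-point data in (ii).
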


\begin{remark}
As a preliminary step in the proof (Lemma \ref{lem:coarse}) will show, we also have
$R_j^{-1}\supp\mu\to V$ in the local Hausdorff sense, and thus $0\in V$.
The assumption of a flat blow-down is necessary, at least in arbitrary dimension: in $\R^4$, the Kowalski--Preiss cone
$$\mu:=\Hh^3\res\mathcal{C},\quad \mathcal{C}:=\{x\in\R^4:x_4^2=x_1^2+x_2^2+x_3^2\}$$
is a $3$-uniform measure \cite[Proposition~3.8]{KP}, and hence has $\mathcal{D}(\mu)=0$.
\end{remark}

\begin{example}
Note that Theorem \ref{thm:zero} can also be applied to lower-dimensional measures, such as atomic measures:
for instance, letting $\mu:=\sum_{a\in\mathbb Z^m\times\{0\}}\delta_a$, it is easy to check that $\mathcal{D}(\mu)=0$,
and indeed $\supp\mu\subset\R^m\times\{0\}$.
\end{example}

Generalizing Theorem \ref{thm:zero}, for an $m$-plane $V\ni0$, we let
\begin{equation}\label{eq:width}
 w_V(\mu):=\inf_{c\in V^\perp}
       \sup_{x\in\supp\mu}|\pi_{V^\perp}x-c|,
\end{equation}
which we call the \emph{width} of $\mu$. The following quantitative version of Theorem \ref{thm:zero} holds.

\begin{theorem}\label{thm:width}
For every $0\le m<n$ there exists $C(m,n)>0$ such that, whenever $\mu$ has a flat blow-down $\Hh^m\res V$ and
$\mathcal D(\mu)<\infty$, we have
\begin{equation}\label{eq:main-width}
 w_V(\mu)^2\le C\mathcal D(\mu).
\end{equation}
\end{theorem}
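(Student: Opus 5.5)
We follow the strategy announced in the abstract: an anisotropic blow-down followed by a Young-measure analysis and a heat-semigroup rigidity lemma. Write $\R^n=V\oplus V^\perp$ and $z=(y,t)$ with $y\in V$, $t\in V^\perp$. Fix $D>\mathcal D(\mu)$, and choose scales $R_j\to\infty$ with $S_{R_j}\mu\to\HH^m\res V$.

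\textbf{Step 1: coarse flatness.} We first establish that $R_j^{-1}\supp\mu\to V$ in the local Hausdorff sense; this is the content of Lemma~\ref{lem:coarse}. The mechanism is as follows. The Gaussian densities of $S_{R_j}\mu$ at unit scale converge to $1$ uniformly on the support. Lower semicontinuity of the density then forces every limit point of $R_j^{-1}\supp\mu$ to lie in $\supp(\HH^m\res V)=V$. In particular $0\in V$.

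\textbf{Step 2: anisotropic rescaling.} We now zoom out in the $V$ directions by $R$ but keep the $V^\perp$ directions at unit scale. Concretely, set $\nu_R:=R^{-m}(T_R)_*\mu$ with $T_R(y,t)=(y/R,t)$. The first point to check is that the transverse coordinates stay bounded on average. For this we expand $G_R^\mu(x)-1$ at scale $R$, using second-order Taylor expansion of the Gaussian weight $e^{-|t-t_x|^2/4R^2}\approx 1-|t-t_x|^2/4R^2$. The hypothesis $|G_R^\mu-1|\le DR^{-2}$ controls the second moments $\int |t-t_x|^2\,e^{-|y-y_x|^2/4R^2}\,d\mu$ up to $O(D)$, after normalization, provided the purely tangential term matches $1$ to order $R^{-2}$. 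That matching is not given a priori; it is the subtle point addressed in Step 3.

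Granting this, $\nu_{R_j}$ subconverges to a Young measure $\nu=\HH^m\res V\otimes\sigma_y$, where $\sigma_y$ is a probability measure on $V^\perp$. Passing to the limit in the discrepancy bound, along with its first-order variation in the base point (differentiating in $x$), yields an identity of the form
\[
(\text{heat flow in }y)\ \text{of}\ \int |t-s|^2\,d\sigma_y(s)\ \text{is bounded by } C D
\]
uniformly in $y$ and in $s\in\supp\sigma_y$.

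\textbf{Step 3: rigidity, and the main obstacle.} The heart of the argument is a rigidity lemma for the heat semigroup: a bounded function of $y\in V$ whose heat extension satisfies the limiting identity must be constant. Applied here, it should show that the barycenter $c(y)=\int s\,d\sigma_y$ is a constant $c_0$ and that $\supp\sigma_y\subseteq\bar B_\delta(c_0)$ with $\delta^2\le CD$.

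I expect the main obstacle to be that Young-measure limits lose information about extreme points of the support. The width is a supremum over all of $\supp\mu$, whereas weak limits only see mass. To get around this, I would use that the discrepancy bound is uniform over every $x\in\supp\mu$. Taking $x$ to be a point of nearly maximal $|\pi_{V^\perp}x-c_0|$ and recentering there, the Gaussian average $G_R^\mu(x)$ picks up a deficit of order $|\pi_{V^\perp}x-c_0|^2/R^2$ relative to the averages at typical points. Comparing this deficit with the identity $\int(\ldots)\,d\nu$ from the limit then gives $|\pi_{V^\perp}x-c_0|^2\le C(m,n)D$. Letting $D\downarrow\mathcal D(\mu)$ completes the proof of \eqref{eq:main-width}.

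The same computation also resolves the tangential issue left open in Step 2. The tangential term $R^{-m}\int e^{-|y-y_x|^2/4R^2}\,d\mu$ is forced to equal $1+O(R^{-2})$ by averaging $G_R^\mu$ over points $x$ and exploiting the semigroup identity $G_R*G_{R'}=G_{\sqrt{R^2+R'^2}}$. I would attempt to make this precise first, since Step 2 depends on it.
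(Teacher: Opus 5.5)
There is a genuine gap, located at the end of Step~2. Passing to the limit in $|G_R^\mu-1|\le DR^{-2}$ does not show that the heat-averaged fiber second moments are $O(D)$. What it actually gives is \eqref{eq:quant-row}, namely $\bigl|4r^2A_r(x')-\int k_r(x'-y)\int|s-t|^2\,d\sigma_y(s)\,dy\bigr|\le4\mathcal D(\mu)$ for $(x',t)$ in the support of the Young measure. Here $A_r:=\lim_{L\to\infty}L^2(\PP_r\rho_L-1)$, with $\rho_L:=(\pi_V)_*\nu_L$, is the tangential defect.

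Your proposed fix (averaging over base points plus the semigroup law) cannot show $r^2A_r=O(D)$. Averaging just reproduces an averaged form of the same identity. The law $\PP_sA_r=A_{\sqrt{r^2+s^2}}$ is satisfied by $A_r=\PP_r\lambda$ for any positive measure $\lambda$, of arbitrary size. In fact, after normalizing $w_V(\mu)=1$, one has $2r^2A_r(0)=1+O(\sqrt{\mathcal D(\mu)})$ at a support point (see \eqref{eq:quant-root-defect}), so the bound $r^2A_r\lesssim D$ is equivalent to the theorem itself.

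A symptom of this: if your Step~2 identity held, Jensen would give $|t-\PP_rc(x')|^2\le CD$ for every $t\in\supp\sigma_{x'}$. Then $|D\PP_rc|\le C/r$ with $r\to\infty$ would bound the diameter of the union of all fiber supports by $2\sqrt{CD}$, with no rigidity lemma needed at all. The real work is showing that a \emph{nonnegative} $A_r$ satisfying the semigroup law, $A_r(0)=M^2/(2r^2)$, and $2r^2A_r(x')=M^2-t\cdot\PP_rc(x')$ for $t\in\supp\sigma_{x'}$ (fibers on a sphere of radius $M$) forces $M=0$. This is Corollary~\ref{lem:fiber} together with Lemma~\ref{lem:function}.

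Step~3 also lacks the two ingredients that make that rigidity usable quantitatively. First, the rigidity is only qualitative (exact identities, fibers on a sphere). So $\delta^2\le CD$ must come from a contradiction argument: normalize $w_V(\mu_j)=1$ with $\mathcal D(\mu_j)\to0$ and pass to a limiting Young measure.

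Second, that limit must retain width $1$; this is the extremal-point issue you correctly flagged. To handle it, the paper first shows, before any blow-down, that $K:=\overline{\pi_{V^\perp}\Sigma}$ lies in a thin shell. Lemma~\ref{taylor} at every support point gives $\operatorname{osc}_K(|q|^2-2b\cdot q)\le8\mathcal D(\mu)$, see \eqref{eq:quant-oscillation}. The minimal enclosing ball $\bar B_1(0)$ of $K$ satisfies $0\in\operatorname{co}(K\cap\partial B_1(0))$, and together these force $|b|\le2\sqrt{\mathcal D(\mu)}$ and \eqref{eq:quant-shell}. Hence all fibers live near the unit sphere, and the limit satisfies the hypotheses of Corollary~\ref{lem:fiber} with $M=1$, a contradiction.

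Your deficit comparison at a near-extremal point is essentially this Taylor expansion. By itself, though, it only yields near-equidistance of $K$ from the barycenter, not smallness. For $\mathcal D(\mu)=0$ it gives exactly $|\pi_{V^\perp}x-b|=|b|$ on $\Sigma$, as in \eqref{eq:zero-cylinder}, and ruling out $|b|>0$ is the whole content of Theorem~\ref{thm:zero}.
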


\begin{remark}
While the statement is trivial when $m=n$, the reader might wonder what can be said when $m\in[0,\infty)\setminus\{0,1,\dots,n\}$. In fact, in this case the assumption $\mathcal{D}(\mu)<\infty$ implies $\mu=0$: if we had $\mu\neq0$, then any blow-down of $\mu$ would be a nontrivial $m$-uniform measure (as shown by the proof of Lemma \ref{lem:coarse}), which is impossible since such measures do not exist (see, e.g., the proof of Proposition \ref{lem:flat-tangent}).
\end{remark}

\begin{example}\label{ex:two-planes}
Given an $m$-plane $V\ni0$ and $a\in V^\perp\setminus\{0\}$, consider $\mu:=\frac12\Hh^m\res(V+a)+\frac12\Hh^m\res(V-a)$, for which the blow-down is $\HH^m\res V$. Then $w_V(\mu)=|a|$, while at every $x\in\supp\mu$ we have
\[
 G_R^\mu(x)=\frac{1+e^{-|a|^2/R^2}}2=1-\frac{|a|^2}{2R^2}+O_a(R^{-4}),
\]
so that $\mathcal D(\mu)=|a|^2/2$.
\end{example}

\begin{example}\label{ex:eight}
In $\R^3$, for $m=2$, let $V:=\R^2\times\{0\}$ and
\[
 \mu:=\Hh^2\res V+\gamma\delta_{\lambda e_3}+\gamma\delta_{-\lambda e_3},
\]
for arbitrary $\gamma,\lambda>0$. Its blow-down is $\Hh^2\res V$ and its width is $w_V(\mu)=\lambda$.
For any $p\in\R^2$ we can compute that
\[
 G_R^\mu(p,0)=1+\frac{\gamma}{2\pi R^2}e^{-(|p|^2+\lambda^2)/(4R^2)},
\]
while
\[
 G_R^\mu(\pm\lambda e_3)
 =e^{-\lambda^2/(4R^2)}+\frac{\gamma}{4\pi R^2}(1+e^{-\lambda^2/R^2})
 =1-\frac{(\lambda^2/4)-(\gamma/(2\pi))}{R^2}+O_{\gamma,\lambda}(R^{-4}).
\]
Consequently, we have $\mathcal D(\mu)=\max\{\frac\gamma{2\pi},|\frac{\lambda^2}{4}-\frac\gamma{2\pi}|\}$.
In particular, taking $\gamma:=\pi\lambda^2/4$ gives $\mathcal D(\mu)=\lambda^2/8$, showing that $C(m,n)\ge8$ for every $m\ge2$ and $n\ge m+1$.
\end{example}

As already mentioned, Theorem \ref{thm:zero} applies to $m$-uniform measures, in which case it gives
the classical assertion that an $m$-uniform measure with a flat blow-down is flat \cite[Corollary~3.16]{Preiss}.
This constitutes the main rigidity step in the proof of the following rectifiability theorem of Preiss~\cite[Theorem~5.6]{Preiss}. The original proof of this step was perhaps the most technical part of the proof of Preiss's result, involving a careful analysis of Gaussian-weighted moments of $\mu$ up to order four.

\begin{theorem}[Preiss]\label{thm:preiss}
Let $\mu\ne0$ be a positive Radon measure on $\R^n$ and let $\alpha\ge0$.
Assume that $\Theta^\alpha(\mu,x):=\lim_{r\to0}\frac{\mu(B_r(x))}{\omega_\alpha r^\alpha}$ exists
and belongs to $(0,\infty)$, at $\mu$-a.e.\ $x$.
Then $\alpha=m\in\{0,\ldots,n\}$ and $\mu$ is $m$-rectifiable.
\end{theorem}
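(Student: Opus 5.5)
We follow Preiss's overall architecture and replace only the rigidity step by Theorem~\ref{thm:zero}. The proof splits into four stages: reduce to tangent measures that are uniform, show that $\alpha$ is an integer, find one flat tangent, and then propagate flatness to conclude rectifiability.

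\emph{Step 1 (tangent measures are uniform).} By the standard Marstrand--Mattila--Preiss density argument, at $\mu$-a.e.\ $x$ every tangent measure $\nu\in\mathrm{Tan}(\mu,x)$ is, after normalization, $\alpha$-uniform, meaning $\nu(B_r(y))=\omega_\alpha r^\alpha$ for all $y\in\supp\nu$ and $r>0$. We also use the fact that tangents to tangents are tangents, valid at $\mu$-a.e.\ point. Then Marstrand's argument gives $\alpha\in\{0,\dots,n\}$. Concretely, an $\alpha$-uniform measure has a tangent at some point that is translation invariant along a linear space $W$. One iterates, taking tangents at points off $W$, until $\supp$ is a full linear space, which forces $\alpha=\dim$. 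We call the resulting integer $m$.

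\emph{Step 2 (existence of a flat tangent).} Following Preiss, we show that among the $m$-uniform tangents there is a flat one. Preiss's argument, as in \cite{Preiss}, goes as follows.
\begin{itemize}
\item If some uniform $\nu$ is not flat at $\infty$, one takes tangents at points of $\supp\nu$.
\item Every blow-up of a uniform measure at its support points is uniform.
\item A connectedness/compactness argument on the space of uniform measures, normalized by the distance from flatness at scale $1$ compared with scale $\infty$, produces a uniform measure whose blow-down is flat.
\end{itemize}
This part uses only the first- and second-order Gaussian moment expansion, and we can quote it. The output is an $m$-uniform $\nu$ with a flat blow-down $\Hh^m\res V$.

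\emph{Step 3 (rigidity).} Since $\nu$ is $m$-uniform, $G_R^\nu\equiv1$ on $\supp\nu$, so $\mathcal D(\nu)=0$. Theorem~\ref{thm:zero} then gives that $\supp\nu$ lies in an $m$-plane. An $m$-uniform measure supported in an $m$-plane $P$ equals $\Hh^m\res P$: by Lebesgue differentiation its density is $1$ on its support, and a closed subset of $P$ on which $\Hh^m\res P$ has full density at every point must be all of $P$. Hence $\nu$ is flat. Combined with the tangent-to-tangent principle, this shows that at $\mu$-a.e.\ $x$ some tangent measure is flat.

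\emph{Step 4 (from one flat tangent to all, and rectifiability).} This step is the main obstacle, since it needs the fact that the set of $m$-uniform measures close to flat is open and closed in the relevant sense. Preiss shows that if one tangent at $x$ is flat, then all tangents at $x$ are flat. The tool is a quantitative estimate: an $m$-uniform measure which is $\varepsilon_0$-close to flat at scale $1$ is flat, for $\varepsilon_0=\varepsilon_0(m,n)$ small. Together with the connectedness of $\mathrm{Tan}(\mu,x)$ under rescaling, this prevents the tangents from ever leaving the flat family. For the quantitative estimate we would try Theorem~\ref{thm:width} applied along blow-downs, with a compactness/contradiction argument: a sequence of non-flat uniform measures converging to flat would have flat blow-downs, hence be flat by Step 3. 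Once every tangent at a.e.\ point is a multiple of $\Hh^m$ on an $m$-plane, Mattila's criterion (flat tangents together with positive finite density) gives $m$-rectifiability.
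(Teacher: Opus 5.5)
There is a genuine gap in Step~4: its key claim is false. An $m$-uniform measure that is $\varepsilon_0$-close to flat at scale $1$ need not be flat. Take the Kowalski--Preiss cone $\nu:=\Hh^3\res\mathcal C$ and a point $y\in\mathcal C\setminus\{0\}$. The rescalings $S_{y,r}\nu$ are $3$-uniform and non-flat, yet they converge to $\Hh^3\res T_y\mathcal C$ as $r\to0$, so $\mathcal F(S_{y,r}\nu)\to0$.

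The same example breaks your compactness justification. These measures converge to a flat one, but each has the cone itself as its blow-down. Convergence at scale $1$ says nothing about blow-downs, so Step~3 cannot be invoked.

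What is true, and what the paper proves, concerns \emph{all large scales}: if $\nu\in\U^m$ and $\liminf_{R\to\infty}\mathcal F(S_R\nu)<\delta_0$, then $\nu$ is flat. Even this needs an ingredient you lack. A blow-down $\lambda$ with $\mathcal F(\lambda)$ small is not yet flat, and Theorem~\ref{thm:zero} needs an exactly flat blow-down. Theorem~\ref{thm:width} does not help, since for uniform measures $\mathcal D=0$ and it reduces to Theorem~\ref{thm:zero}.

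The missing idea is Preiss's gap for blow-downs. The second-order Gaussian expansion shows that every blow-down satisfies $|y|^2=y\cdot Q(\lambda)y$ on $\supp\lambda$ (Lemma~\ref{lem:quadrics}). A trace/eigenvalue argument then shows that such a $\lambda$ with $\mathcal F(\lambda)\le\delta_0$ is flat (Lemma~\ref{lem:gap}). Only at that point does Corollary~\ref{cor:uniform-flat} apply.

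Correspondingly, your connectedness step must produce a tangent that is near-flat at \emph{every} scale $\ge1$, not just at scale $1$; a plain intermediate-value argument does not give this. The paper takes a flat tangent $\zeta=\lim\mu_{r_j}$ and a non-flat one $\xi=\lim\mu_{s_j}$, and fixes $\delta<\min\{\delta_0,\mathcal F(\xi)\}$. It lets $\sigma_j$ be the \emph{largest} $t\in[s_j,r_j]$ with $\mathcal F(\mu_t)\ge\delta$, and checks that $r_j/\sigma_j\to\infty$ (otherwise $\zeta$ would be a rescaling of the non-flat limit). This yields a tangent $\upsilon$ with $\mathcal F(\upsilon)=\delta$ but $\mathcal F(S_R\upsilon)\le\delta$ for all $R\ge1$, contradicting the rigidity above.

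You also spend Theorem~\ref{thm:zero} in the wrong place. A flat tangent at a.e.\ point is elementary: your own Step~1 iteration already ends with a uniform tangent supported on an $m$-dimensional linear space, hence flat (cf.\ Proposition~\ref{lem:flat-tangent}). So Steps~2--3, whose ``connectedness/compactness'' sketch is not an actual argument, are unnecessary. The rigidity theorem is genuinely needed inside Step~4.
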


The aforementioned assertion was also a key tool in the proof of other results, such as those in \cite{Tolsa}.
Theorem \ref{thm:preiss} was recently extended to codimension-one measures in Heisenberg groups in \cite{Merlo}.
In fact, with slightly more work, one can also get the following stronger statement, which first appeared in \cite[Corollary~5.5]{Preiss}.

\begin{theorem}[Preiss]\label{preiss.bis}
The same holds if, rather than assuming the density $\Theta^\alpha(\mu,x)$ to exist,
we just assume
\begin{equation}\label{eq:preiss.bis}
0<\Theta^{\alpha,*}(\mu,x)\le(1+\epsilon_0)\Theta_*^\alpha(\mu,x)<\infty
 \quad\text{for }\mu\text{-a.e. }x,
\end{equation}
for a suitable threshold $\epsilon_0(\alpha,n)>0$.
\end{theorem}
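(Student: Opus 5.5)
We prove Theorem~\ref{preiss.bis} by following Preiss's original scheme: tangent measures, then a reduction to uniform measures, then flatness at infinity. The only new ingredient is that the rigidity step is supplied by Theorem~\ref{thm:zero}.

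\textbf{Tangent measures are uniform.} Fix a $\mu$-typical point $x$ and take a tangent measure $\nu=\lim_j c_jS_{x,r_j}\mu$. From the density bounds \eqref{eq:preiss.bis} we get that, after normalization, $\nu$ satisfies
\[
\omega_\alpha r^\alpha\le\nu(B_r(y))\le(1+\epsilon_0)\,\omega_\alpha r^\alpha
\]
for all $y\in\supp\nu$ and $r>0$. This is the standard argument that tangents at typical points see the same density bounds at every point of their support, using Lebesgue density together with the fact that tangents of tangents are tangents. When the density exists, as in Theorem~\ref{thm:preiss}, $\nu$ is exactly $\alpha$-uniform. Marstrand's argument then gives $\alpha=m\in\{0,\dots,n\}$, since no nontrivial $\alpha$-uniform measure exists for non-integer $\alpha$.

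\textbf{Uniform measures are flat.} By Preiss's criterion, $\mu$ is $m$-rectifiable as soon as, at $\mu$-a.e.\ point, every tangent measure is flat. It is enough to show that flat measures are, in a suitable sense, open and closed among tangent measures. Uniform measures have flat tangents at $\nu$-a.e.\ point, so if some tangent $\nu$ is not flat, the connectedness of the cone of tangents lets us produce a tangent $\nu'$ whose blow-down is flat while $\nu'$ itself is non-flat but close to flat.

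Such a $\nu'$ is $m$-uniform, hence $G_R^{\nu'}\equiv1$ on its support and $\mathcal D(\nu')=0$. It also has a flat blow-down $\Hh^m\res V$. Theorem~\ref{thm:zero} then forces $\supp\nu'$ into a plane parallel to $V$. An $m$-uniform measure supported in an $m$-plane equals $\Hh^m$ restricted to that plane, so $\nu'$ is flat, a contradiction.

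For the quasi-uniform case of Theorem~\ref{preiss.bis}, we instead need an estimate that, for $\epsilon_0$ small, tangents close to flat at all large scales are flat. Here we would use Theorem~\ref{thm:width}. The measure $\nu'$ has small discrepancy relative to its scale, so $w_V(\nu')^2\lesssim\mathcal D(\nu')$, which yields flatness at every scale via a compactness argument letting $\epsilon_0\to0$.

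\textbf{Main obstacle.} The delicate step is the connectedness argument producing a tangent with flat blow-down. I would follow Preiss's Section~5 structure: the distance to flat measures, evaluated along rescalings, varies continuously, which yields an intermediate scale at which $\nu'$ is at a fixed small distance from flat while its blow-down is flat. For Theorem~\ref{preiss.bis}, the extra difficulty is controlling $\mathcal D$, since the measures are only quasi-uniform and $G_R-1$ is only $O(\epsilon_0)$ rather than $o(R^{-2})$. I would therefore first pass to a limit $\epsilon_0\to0$ to obtain genuinely uniform measures, reducing again to Theorem~\ref{thm:zero}.
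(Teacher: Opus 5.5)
Your plan for the quasi-uniform case has a genuine gap. With $\eps_0>0$ fixed, you cannot show that the tangents are flat, so the Marstrand--Mattila criterion is out of reach.

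Theorem~\ref{thm:width} does not help. For $\nu'\in\U^m_{\eps_0}$ you only get $|G_R^{\nu'}-1|=O(\eps_0)$ on $\supp\nu'$, uniformly in $R$, so in general $\mathcal D(\nu')=\infty$, as you note yourself. Even a finite width would not make $\nu'$ flat.

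More fundamentally, ``close to flat at all large scales $\Rightarrow$ flat'' is false in $\U^m_\eps$ for every $\eps>0$. Take $\Hh^1$ on the union of the two rays from $0$ in the directions $(\pm1,a)$. A direct computation gives $2r\le\Hh^1(\Gamma\cap B_r(x))\le 2r(1+a^2/4)$, so this measure lies in $\U^1_{a^2/4}$. It is invariant under $S_R$ and is not flat. Its rescalings $S_{y,s}$ and their limits form a compact, rescaling-invariant subclass of $\U^1_{a^2/4}$. So no argument using only the structure of tangent classes (membership in $\U^m_{\eps_0}$ plus invariance under $S_{y,s}$) can force tangents to be flat. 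Letting $\eps_0\to0$ and reducing to Theorem~\ref{thm:zero} can therefore give only a \emph{quantitative} conclusion. Namely, for each $\delta\in(0,\delta_0)$ and $\eps_0=\eps_0(\delta)$ small, every tangent at a good point satisfies $\mathcal F(\nu)\le\delta$.

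That is what the paper proves, and it takes more than you indicate.

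\begin{itemize}
\item The starting point is the uniform-case fact that no $\nu\in\U^m$ has $\mathcal F(\nu)=\delta$ and $\mathcal F(S_R\nu)\le\delta$ for all $R\ge1$. This fact needs Lemma~\ref{lem:quadrics} and the gap Lemma~\ref{lem:gap} to upgrade ``blow-downs $\delta$-close to flat'' to an exactly flat blow-down before Theorem~\ref{thm:zero} applies. Your uniform-case sketch asserts that the blow-down of $\nu'$ is flat without this step.
\item Compactness converts this into a finite-scale statement. For all $\nu\in\U^m_\eps$ with $\mathcal F(\nu)=\delta$, one has $\mathcal F(S_R\nu)>\delta$ for some $R\in[1,L]$, and $\mathcal F(S_R\nu)>\tau$ for all $R\in[1,L]$.
\item One also needs a tangent with $\mathcal F<\tau$ at every good point. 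The paper obtains it through Kuratowski upper limits and the \emph{proof} of Proposition~\ref{lem:flat-tangent}, since that proposition itself requires exactly uniform tangents.
\item The final ingredient, missing from your plan, is a rectifiability criterion that needs only approximate flatness. The paper passes to a compact purely unrectifiable $F\subseteq\mathcal G$ with uniform density bounds and uniform convergence to tangents. There it derives the tube and filling conditions \eqref{eq:tube}--\eqref{eq:filling} with small $\eta$, and concludes via Proposition~\ref{thm:Cprime}, a version of Preiss's Lemma~5.2/Theorem~5.3.
\end{itemize}

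Finally, integrality of $\alpha$ in the quasi-uniform case also needs a compactness argument built on the uniform case. Marstrand's theorem does not give it directly.
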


It should be mentioned that the use of Gaussian averages to study measures already appeared several times,
particularly in the study of uniform measures (see, e.g., \cite{KiP,Preiss}).

\subsection{Strategy of proof}

Besides coarse growth bounds, in Section \ref{sec:prel} we show that
$\pi_{V^\perp}\supp\mu$ is bounded whenever $\mathcal{D}(\mu)<\infty$ and $\mu$ has a flat blow-down $\HH^m\res V$,
by means of a simple Taylor expansion. Further, in the case $\mathcal{D}(\mu)=0$,
we also show that $\supp\mu$ is contained in the shell $\{x\in\R^n:|\pi_{V^\perp}x|=|b|\}$
(for some $b\in V^\perp$, up to a translation such that $-b\in\supp\mu$).

In Section \ref{sec:function} we prove Theorem \ref{thm:zero}. To do this, we identify $V=\R^m\times\{0\}$
and perform an anisotropic blow-down, shrinking the horizontal components (in $\R^m$)
and leaving the vertical ones (in $\R^{n-m}$) unchanged. It is easy to see that such a blow-down disintegrates as
$dx'\otimes\kappa_{x'}$, where $x'$ denotes points in $\R^m$, while $(\kappa_{x'})_{x'\in\R^m}$
is a Young measure retaining information on the vertical oscillation of the initial $\mu$. Each $\kappa_{x'}$
is a probability measure on the sphere $\{q\in V^\perp:|q|=|b|\}$.

We then study the center-of-mass function $h(x'):=\int_{V^\perp}q\,d\kappa_{x'}(q)$ and its Gaussian averages $\PP_rh$.
It turns out that $h$ satisfies several identities, such as $$|b|^2-h\cdot\PP_rh=2r^2A_r\ge0$$ among others,
where $(A_r)_{r>0}$ is a family of auxiliary functions (given by $\lim_{L\to\infty}L^2(\PP_r\rho_L-1)$,
where $\rho_L:=S_{0,L}(\pi_V)_*\mu$, which is the projection of the anisotropic rescaling at scale $L$).

Since we can easily show that $r^2A_r\to0$ in $L^1_{loc}$ as $r\to0$, we have $|b|^2-h\cdot\PP_rh\to0$, which (together with the trivial bound $|h|\le |b|$)
forces $|h|=|b|$, so that a.e.\ $\kappa_{x'}$ must be a Dirac mass $\delta_{h(x')}$. The aforementioned identities
then give $h\in H^1_{loc}$, $\PP_rh(0)=0$, $\PP_r(|Dh|^2)(0)\le|b|^2/r^2$, and $\PP_r(h\cdot\PP_sh)(0)=|b|^2r^2/(r^2+s^2)$, which can be shown to imply $h\equiv0$, i.e., $|b|=0$, giving $\supp\mu\subseteq V$.

In Section \ref{sec:preiss}, we summarize the main arguments needed to deduce Theorems \ref{thm:preiss} and \ref{preiss.bis} from Theorem \ref{thm:zero}, for the sake of completeness; all the content of this section is actually well known, up to minor adaptations.

Finally, Section \ref{sec:quantitative} is devoted to Theorem \ref{thm:width}.
Given $\mu$ as in its statement, since we already know that the width $w_V(\mu)<\infty$
and the claimed bound is scale-invariant, we can assume $w_V(\mu)=1$ and reduce ourselves to showing that
$\mathcal{D}(\mu)$ cannot be arbitrarily small. It might be tempting to use a direct compactness argument,
but the issue is that the width does not pass to the limit.
On the other hand, we can perform the same anisotropic blow-down used before: in this way, we get a Young measure
$(\kappa_{x'})_{x'\in\R^m}$ where each $\kappa_{x'}$ is supported in an annular region $\{q\in V^\perp:|q|\in[1-C\sqrt{\mathcal{D}(\mu)},1]\}$. We also obtain approximate versions of the previous identities,
which then allow us to use a compactness argument reducing to the previous situation (in the limit $\mathcal{D}(\mu)\to0$),
for a suitable limiting $h$ with $|h|=1$, contradicting the conclusion that $h\equiv0$.

\subsection{Usage of AI tools} The statement and proof of Theorem \ref{thm:zero} were found by a commercial AI model,
once the author asked it to try to simplify the proof of the step ``flatness at infinity implies flatness''
in Preiss's proof using anisotropic blow-downs and Young measures. Besides rewriting the output, the author then obtained the quantitative version of it, namely Theorem \ref{thm:width}. The AI model was also used to generate the examples above and in the proofreading stage. The author takes full responsibility for all the contents of the paper.

\subsection{Acknowledgements}The author's research is funded by the European Research Council (ERC) through StG 101165368 ``MAGNETIC''.

\section{Preliminary facts for measures with finite discrepancy}\label{sec:prel}

Throughout the section, we will assume that $\mu$ is a positive Radon measure on $\R^n$ such that $G_R^\mu(x)<\infty$
for every $x\in\supp\mu$ and $R>0$, as well as $$\mathcal D(\mu)<\infty,$$ where we recall that $G_R^\mu$ and the Gaussian discrepancy $\mathcal{D}(\mu)$ were introduced in \eqref{eq:gaussian-measure}--\eqref{eq:defect-infinity}.
We first establish a few consequences of the assumption.
Up to a translation, without loss of generality we will also assume that
$0\in\Sigma:=\supp\mu$.

\begin{lemma}\label{lem:coarse}
There exist $R_0(\mu)\ge1$ and constants $c(n),C(n)>0$ such that
\begin{equation}\label{eq:coarse-growth}
 c r^m\le\mu(B_r(x))\le Cr^m
 \quad\text{for }x\in\Sigma,\ r\ge R_0,
\end{equation}
as well as
\begin{equation}\label{coarse.off}
\mu(B_r(x))\le Cr^m\quad\text{for }x\in\R^n,\ r\ge R_0.
\end{equation}
Further, every sequence $S_{x_j,R_j}\mu$, with $x_j\in\Sigma$ and
$R_j\to\infty$, has a subsequence converging to an
$m$-uniform measure, with local convergence of supports in the Hausdorff metric.
\end{lemma}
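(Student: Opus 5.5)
Since $\mathcal D(\mu)<\infty$, there is $R_0\ge1$ with $|G_R^\mu(x)-1|\le 2\mathcal D(\mu)R^{-2}+R^{-2}\le\tfrac12$ for all $x\in\Sigma$ and $R\ge R_0$. The plan is to read off growth bounds from this two-sided control of the Gaussian averages, and then to pass to rescalings.

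For the upper bound \eqref{eq:coarse-growth} at $x\in\Sigma$, I will bound the indicator of $B_r(x)$ by $e\cdot e^{-|z-x|^2/(4(r/2)^2)}$. This gives $\mu(B_r(x))\le e\,(4\pi (r/2)^2)^{m/2}G_{r/2}^\mu(x)\le C r^m$ for $r\ge 2R_0$.

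For the off-support bound \eqref{coarse.off}, fix $x\in\R^n$. If $B_r(x)\cap\Sigma=\emptyset$ there is nothing to prove. Otherwise pick $y\in\Sigma\cap B_r(x)$, so that $B_r(x)\subseteq B_{2r}(y)$, and the previous bound applies.

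The lower bound is the only real point, and I will get it by splitting the Gaussian integral into near and far parts. For $x\in\Sigma$, write
\[
\tfrac12\le G_R^\mu(x)=(4\pi R^2)^{-m/2}\Big[\int_{B_{\Lambda R}(x)}+\int_{\R^n\setminus B_{\Lambda R}(x)}\Big]e^{-|z-x|^2/(4R^2)}\,d\mu(z).
\]
To control the tail, decompose it into annuli $B_{2^{k+1}\Lambda R}\setminus B_{2^k\Lambda R}$ and use the upper bound on each. This bounds the tail by $C\sum_k (2^k\Lambda)^m e^{-4^k\Lambda^2/4}$, which is $\le\tfrac14$ once $\Lambda=\Lambda(n)$ is chosen large; $m\le n$ makes the choice depend only on $n$. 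The near part is then at least $\tfrac14$, so $\mu(B_{\Lambda R}(x))\ge\tfrac14(4\pi R^2)^{m/2}$. Setting $r=\Lambda R$ gives $c(n)r^m\le\mu(B_r(x))$ for $r\ge\Lambda R_0$, and we enlarge $R_0$ accordingly. Since $G\le 3/2$ as well, all constants depend only on $n$ (and $m\le n$). The same tail estimate will be reused below.

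For the compactness statement, consider $\mu_j:=S_{x_j,R_j}\mu$. By the growth bounds, $\mu_j(B_\rho(y))\le C\rho^m$ for all $y$ and all $\rho\ge R_0/R_j$, hence for all $\rho>0$ in the limit. We also have $\mu_j(B_\rho(y))\ge c\rho^m$ for $y\in\supp\mu_j$ and $\rho\ge R_0/R_j$. Weak-* compactness gives a subsequence $\mu_j\weak\nu$.

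Hausdorff convergence of supports follows from the two-sided bounds in the standard way. A limit point $y$ of points $y_j\in\supp\mu_j$ satisfies $\nu(\bar B_\rho(y))\ge\limsup_j\mu_j(B_{\rho/2}(y_j))\ge c(\rho/2)^m>0$, so $y\in\supp\nu$. Conversely, any point of $\supp\nu$ is a limit of points of $\supp\mu_j$. In particular $0\in\supp\nu$, so $\nu\neq0$.

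It remains to show that $\nu$ is $m$-uniform. By scaling, $G_s^{\mu_j}(y)=G_{sR_j}^\mu(x_j+R_jy)$, so for $y\in\supp\mu_j$ we get $|G_s^{\mu_j}(y)-1|\le C s^{-2}R_j^{-2}\to0$. For $y\in\supp\nu$, choose $y_j\in\supp\mu_j$ with $y_j\to y$. The uniform tail estimate, from the upper growth bound, allows passing to the limit in the non-compactly supported Gaussian integrand, giving $G_s^\nu(y)=1$ for all $s>0$ and all $y\in\supp\nu$.

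The main obstacle is upgrading $G_s^\nu(y)=1$ to $\nu(B_r(y))=\omega_m r^m$. Set $f_y(t):=\nu(B_{\sqrt t}(y))$. Then $(4\pi s^2)^{-m/2}\int_0^\infty e^{-t/(4s^2)}\,df_y(t)=1$ for all $s$, i.e.\ the Laplace transform of $df_y$ equals that of $d(\omega_m t^{m/2})$, because flat $\HH^m$ has the same Gaussian averages. By injectivity of the Laplace transform on measures of polynomial growth, $f_y(t)=\omega_m t^{m/2}$. So $\nu$ is $m$-uniform, completing the argument.
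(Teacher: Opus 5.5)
Your proposal is correct and follows essentially the same route as the paper. You get the upper bound by bounding the indicator of the ball by a Gaussian, the off-support bound by recentering at a support point, and the lower bound by an annular tail decomposition; you then obtain $m$-uniformity of the limit from $G_s^\nu\equiv1$ on $\supp\nu$ and injectivity of the Laplace transform. The only differences are cosmetic: you use dyadic rather than arithmetic annuli, and you apply the Laplace transform to the Stieltjes measure $df_y$ directly instead of integrating by parts first.
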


\begin{proof}
The assumption $\mathcal{D}(\mu)<\infty$ implies that,
for any $r>0$ large enough (uniformly in $x\in\Sigma$), we have $G_r^\mu(x)\in(1/2,2)$.
Since $G_r^\mu(x)\ge(4\pi)^{-m/2}e^{-1/4}\frac{\mu(B_r(x))}{r^m}$,
the upper bound in \eqref{eq:coarse-growth} follows.
Also, if $B_r(x)\cap\Sigma\neq\emptyset$ then we can take $x'$ in the intersection
and bound $\mu(B_r(x))\le\mu(B_{2r}(x'))\le C(n)(2r)^m$, giving \eqref{coarse.off} (up to enlarging $C$).

Given $a>1$ and $x\in\Sigma$, expanding $\R^n\setminus B_{ar}(x)=\bigcup_{k=1}^\infty[B_{(k+1)ar}(x)-B_{kar}(x)]$, we deduce that for $r$ large enough we have
\begin{align*}
\frac12\le G_r^\mu(x)&\le C(n,a)\frac{\mu(B_{ar}(x))}{r^m}+\sum_{k=1}^\infty e^{-k^2a^2/4}\frac{\mu(B_{(k+1)ar}(x))}{r^m}\\
&\le C(n,a)\frac{\mu(B_{ar}(x))}{r^m}+C(n)\sum_{k=1}^\infty e^{-k^2a^2/4}(k+1)^ma^m.
\end{align*}
Taking $a>1$ so large that the last series is at most $1/(4C(n))$, we obtain
$\mu(B_{ar}(x))\ge\frac{r^m}{4C(n,a)}$, which gives the lower bound in \eqref{eq:coarse-growth}.

Given rescalings $\mu_j:=S_{x_j,R_j}\mu$ as in the statement, the coarse bounds \eqref{eq:coarse-growth}
hold for all radii $r\ge{R_0}/{R_j}$, proving the existence of a subsequential limit
$\nu$ and convergence of supports. Since
$$G_r^{\mu_j}(x)=G_{R_jr}^\mu(x_j+R_jx)=1+O_{\mu,r}(R_j^{-2})$$
for any $x\in\supp\mu_j$ (i.e., $x_j+R_jx\in\supp\mu$), thanks to the assumption $\mathcal{D}(\mu)<\infty$,
in the limit we get $G_r^\nu(x)=1$ for any $x\in\supp\nu$ (by a truncation argument as above).
Rewriting
\begin{align*}G_r^\nu(x)&=\pi^{-m/2}(4r^2)^{-m/2-1}\int_0^\infty 2s e^{-s^2/(4r^2)}\nu(B_s(x))\,ds\\
&=\pi^{-m/2}(4r^2)^{-m/2-1}\int_0^\infty e^{-\sigma/(4r^2)}\nu(B_{\sqrt\sigma}(x))\,d\sigma,\end{align*}
we see that $F(\sigma):=\nu(B_{\sqrt\sigma}(x))$ has Laplace transform
$\int_0^\infty e^{-\tau\sigma}F(\sigma)\,d\sigma=\pi^{m/2}\tau^{-m/2-1}$, which is easily computed to agree with the transform of $\mathcal{H}^m(B_{\sqrt\sigma}^m(0))=\omega_m\sigma^{m/2}$. By uniqueness, we get
$\nu(B_s(x))=\omega_ms^m$ for all $x\in\supp\nu$, proving that $\nu$ is $m$-uniform.
\end{proof}

Given $R>0$, let us use the notation
\begin{equation}\label{eq:gaussian-moments}
 \E_R[\psi]:=(4\pi R^2)^{-m/2}\int_{\R^n}\psi(z)e^{-|z|^2/(4R^2)}\,d\mu(z),
 \quad Q_R:=\frac1{2R^2}\E_R[z\otimes z]
\end{equation}
(note that $\E_R[1]=G_R^\mu(0)=1+O_\mu(R^{-2})$ for $R\ge1$). The matrix $Q_R$ is normalized in such a way that it
equals the orthogonal projection $\pi_V$ when $\mu=\Hh^m\res V$.
Let us record a useful expansion.

\begin{lemma}\label{taylor}
For every fixed $x\in\R^n$ and $R\ge R_0$, we have
\begin{equation}\label{eq:quadratic-expansion}
 4R^2(G_R^\mu(x)-G_R^\mu(0))
 =2\E_R[z]\cdot x+x\cdot Q_Rx-|x|^2\E_R[1]+O_x(R^{-1}),
\end{equation}
where $O_x$ indicates an implicit constant depending on $x$ and $n$, but not on $\mu$.
\end{lemma}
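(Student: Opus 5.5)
We expand the Gaussian weight around the origin. Write
\[
e^{-|z-x|^2/(4R^2)}=e^{-|z|^2/(4R^2)}\,e^{t},\qquad t:=\frac{2z\cdot x-|x|^2}{4R^2}.
\]
Hence
\[
4R^2\bigl(G_R^\mu(x)-G_R^\mu(0)\bigr)=4R^2\,\E_R\bigl[e^{t}-1\bigr].
\]
We then use the second-order Taylor bound $e^t-1=t+\tfrac12t^2+\rho(t)$ with $|\rho(t)|\le\tfrac16|t|^3e^{|t|}$. The first-order term gives exactly
\[
4R^2\,\E_R[t]=2\E_R[z]\cdot x-|x|^2\E_R[1].
\]
In the second-order term, the dominant piece is $\tfrac12\cdot 4R^2\cdot\frac{(2z\cdot x)^2}{16R^4}=\frac{(z\cdot x)^2}{2R^2}$. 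Its expectation is
\[
\frac{1}{2R^2}\,x\cdot\E_R[z\otimes z]\,x=x\cdot Q_Rx,
\]
using the normalization of $Q_R$ in \eqref{eq:gaussian-moments}. The remaining pieces of $t^2$ are of two kinds:
\begin{itemize}
\item the cross term, giving $-\frac{(z\cdot x)|x|^2}{2R^2}\cdot\frac1{1}\cdot R^{0}$ up to constants, which contributes $O(|x|^3)\E_R[|z|]/R^2$;
\item the pure term $|x|^4/(8R^2)$, which contributes $O(|x|^4)\E_R[1]/R^2$.
\end{itemize}

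The needed moment bounds are $\E_R[|z|^k]\le C(n,k)R^k$ for $R\ge R_0$, with $C$ independent of $\mu$. To prove them, decompose $\R^n$ into the dyadic-type shells $B_{(j+1)R}\setminus B_{jR}$ and apply the upper bound \eqref{eq:coarse-growth} at the point $0\in\Sigma$. This gives
\[
\E_R[|z|^k]\le C\sum_j (j+1)^{k+m}R^k e^{-j^2/4},
\]
which converges. Consequently the cross term is $O_x(R^{-1})$ and the pure $|x|^4$ term is $O_x(R^{-2})$.

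For the cubic remainder we have $|t|\le \frac{|z||x|}{2R^2}+\frac{|x|^2}{4R^2}$, so
\[
4R^2|t|^3\lesssim_x R^2\Bigl(\frac{|z|^3}{R^6}+R^{-6}\Bigr).
\]
The factor $e^{|t|}$ must be absorbed into the Gaussian. Since $|t|\le\frac{|z||x|}{2R^2}+\frac{|x|^2}{4R^2}$ and $R\ge R_0\ge1$, we have
\[
e^{|t|}e^{-|z|^2/(4R^2)}\le C_x\,e^{-|z|^2/(8R^2)}.
\]
This follows by completing the square: $\frac{|z||x|}{2R^2}\le\frac{|z|^2}{8R^2}+\frac{|x|^2}{2R^2}$. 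The moment bound above works equally with the weight $e^{-|z|^2/(8R^2)}$, since it is again controlled by shells and \eqref{eq:coarse-growth}. Hence the remainder is $O_x(R^2\cdot R^3/R^6)=O_x(R^{-1})$.

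The main obstacle is this uniform moment control: making sure that all implicit constants depend only on $x$ and $n$ (through $C(n)$ in Lemma~\ref{lem:coarse}), and not on $\mu$ beyond the requirement $R\ge R_0$. The shell decomposition handles this, since the constants in \eqref{eq:coarse-growth} are $C(n)$. Collecting the first-order term, the main quadratic term and the $O_x(R^{-1})$ errors yields \eqref{eq:quadratic-expansion}.
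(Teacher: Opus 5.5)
Your proof is correct and follows essentially the paper's route. Both arguments use a second-order Taylor expansion of the exponential, bound the cubic remainder by absorbing $e^{|t|}$ into the halved Gaussian $e^{-|z|^2/(8R^2)}$, and control the moments via the annular decomposition with the coarse growth bound at $0\in\Sigma$. The only cosmetic difference is that the paper factors $G_R^\mu(x)=e^{-|x|^2/(4R^2)}\E_R[e^{z\cdot x/(2R^2)}]$ and expands the two factors separately before multiplying, whereas you expand the combined exponent $t$ in one go.
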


\begin{proof}
Indeed, we have
$$G_R^\mu(x)=e^{-|x|^2/(4R^2)}\E_R [e^{z\cdot x/(2R^2)}]$$ and we can expand
$$e^{-|x|^2/(4R^2)}=1-\frac{|x|^2}{4R^2}+O_x(R^{-4}),\quad
\E_R [e^{z\cdot x/(2R^2)}]=\E_R[1]+\frac{\E_R[z\cdot x]}{2R^2}+\frac{\E_R[(z\cdot x)^2]}{8R^4}+\varepsilon_{x,R},$$
where $|\varepsilon_{x,R}|\le O_x(R^{-6})\E_R[|z|^3e^{|z||x|/(2R^2)}]\le O_x(R^{-6-m})\int_{\R^n}|z|^3e^{-|z|^2/(8R^2)}\,d\mu(z)=O_x(R^{-3})$
(the last inequality can be obtained by an annular decomposition as in the previous proof).
Note that, similarly, $\frac{\E_R[z\cdot x]}{2R^2}=O_x(R^{-1})$ and $\frac{\E_R[(z\cdot x)^2]}{8R^4}=O_x(R^{-2})$;
hence, multiplying these two expansions, we get \eqref{eq:quadratic-expansion}.
\end{proof}

We now show that, whenever one blow-down is a plane $V$, the blow-down is in fact unique
and $\Sigma,V$ are within a uniform neighborhood of each other.

\begin{lemma}\label{lem:tube}
Assume that $\mu_j:=S_{R_j}\mu\weak\Hh^m\res V$ (in duality with $C^0_c(\R^n)$) for some sequence $R_j\to\infty$.
Then $\pi_{V^\perp}\Sigma$ is bounded, we have the full convergence
\begin{equation}\label{eq:full-flat-limit}
 S_R\mu\weak\Hh^m\res V,\ Q_R\to \pi_V\quad\text{as }R\to\infty,
\end{equation}
and, for some $C=C(\mu)>0$,
\begin{equation}\label{eq:projection-net}
 \dist(p,\pi_V\Sigma)\le C\quad\text{for }p\in V.
\end{equation}
Further, if we also have $\mathcal{D}(\mu)=0$, then $b:=\lim_{R\to\infty}\E_R[z]$ exists and $b\in V^\perp$, and moreover
\begin{equation}\label{eq:zero-cylinder}
 |\pi_{V^\perp}x-b|^2=|b|^2\quad\text{for }x\in\Sigma.
\end{equation}
\end{lemma}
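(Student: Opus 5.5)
The plan is to evaluate the expansion of Lemma \ref{taylor} at points of $\Sigma$ and compare with the value at $0\in\Sigma$. Set $\mathcal D:=\mathcal D(\mu)$. For $x\in\Sigma$ and $R$ large, both $G_R^\mu(x)$ and $G_R^\mu(0)$ lie within $(\mathcal D+o(1))R^{-2}$ of $1$, so the left side of \eqref{eq:quadratic-expansion} is bounded by $8\mathcal D+o(1)$. This yields
\begin{equation*}
\bigl|2\E_R[z]\cdot x+x\cdot Q_Rx-|x|^2\E_R[1]\bigr|\le 8\mathcal D+o_x(1).
\end{equation*}

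First I would handle the moments. Let $R=R_jr$ and change variables with the coarse growth bounds of Lemma \ref{lem:coarse}, which give uniform Gaussian tails. This shows that $Q_{R_jr}\to\pi_V$ and $R^{-1}\E_{R}[z]\to\int z\,e^{-|z|^2/4r^2}d\Hh^m\res V=0$ along the subsequence. To get full convergence I would argue as follows. By Lemma \ref{lem:coarse}, every blow-down is $m$-uniform with support containing $0$. The relation $R\mapsto 2R$ and the continuity of $R\mapsto S_R\mu$ show that the set of blow-downs is connected. Flat measures are isolated among $m$-uniform measures, for instance via the Preiss--Kirchheim dichotomy, or more simply because $Q$ of an $m$-uniform measure is a projection only if it is flat. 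Together these force the blow-down to be unique, so $S_R\mu\weak\Hh^m\res V$ and $Q_R\to\pi_V$. I expect this uniqueness step to be the main obstacle. If isolation is not available, my fallback is to work only along $R_j$ and to use a bootstrap from the quadratic inequality to control every scale.

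Next, boundedness of $\pi_{V^\perp}\Sigma$. The $O_x(R^{-1})$ error in Lemma \ref{taylor} depends on $x$, so I cannot take $x$ arbitrary directly. Instead I would rescale and apply the same computation to $S_{R}\mu$ at points $x/R$ in a bounded region. Concretely, write $x\cdot Q_Rx-|x|^2\E_R[1]\approx-|\pi_{V^\perp}x|^2+\epsilon_R|x|^2$ and $\E_R[z]=:b_R$. Restricting to points with $|x|\le \eta R$, the inequality gives $|\pi_{V^\perp}x-b_R|^2\le |b_R|^2+8\mathcal D+\epsilon|x|^2$. The Hausdorff convergence of supports from Lemma \ref{lem:coarse} shows $|\pi_{V^\perp}x|=o(|x|)$, and this bootstraps to a uniform bound once $b_R$ is shown to be bounded. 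For the latter I would apply the estimate at two points of $\Sigma$ near $\pm$ a large vector in $V$, which exist by the convergence of supports, and read off $\pi_V b_R=O(1)$ and $|\pi_{V^\perp}b_R|=O(1)$.

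For \eqref{eq:projection-net}, suppose it fails. Then there are points $p_k\in V$ with $\dist(p_k,\pi_V\Sigma)\to\infty$, and I would recenter at the closest point of $\Sigma$ and blow down at scale $\dist(p_k,\pi_V\Sigma)$. Since $\pi_{V^\perp}\Sigma$ is bounded, the limit is supported in a plane parallel to $V$ and has a hole, contradicting that it is $m$-uniform, hence equal to $\Hh^m$ on that plane. Finally, when $\mathcal D=0$ the inequality becomes an equality in the limit. Since $\E_R[1]\to1$ and $Q_R\to\pi_V$, we get $2b_R\cdot x-|\pi_{V^\perp}x|^2\to0$ for each $x\in\Sigma$. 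Using \eqref{eq:projection-net} to choose spanning families of $x\in\Sigma$, this forces $b_R$ to converge to some $b$. Testing with $x$ whose $V$-components span $V$ gives $\pi_Vb=0$, and then $2b\cdot x=|\pi_{V^\perp}x|^2$, which is \eqref{eq:zero-cylinder}.
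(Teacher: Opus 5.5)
\textbf{The uniqueness step fails.} Your route to the full convergence \eqref{eq:full-flat-limit} does not work. Flat measures are \emph{not} isolated among $m$-uniform measures. If $\nu=\Hh^3\res\mathcal C$ is the Kowalski--Preiss cone in $\R^4$ and $y\in\mathcal C\setminus\{0\}$, then the measures $S_{y,r}\nu$ are $3$-uniform, non-flat, and have $0$ in their support. They converge to the flat measure $\Hh^3\res T_y\mathcal C$ as $r\to0$.

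Your alternative observation is true: an $m$-uniform $\lambda$ whose $Q(\lambda)$ is \emph{equal} to a projection is flat. But it says nothing when $Q(\lambda)$ is only close to $\pi_V$. The relevant isolation statement, Lemma~\ref{lem:gap}, holds only within the class $\mathcal Q$ of measures satisfying \eqref{eq:limiting-quadric}. You have not shown that the blow-downs of $\mu$ belong to $\mathcal Q$, and doing so needs exactly the argument described below.

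Even granting isolation, connectedness of the set of blow-downs would only show that every blow-down is flat. The measures $\Hh^m\res W$, $W\in\mathrm{Gr}_m(\R^n)$, form a connected family, so blow-downs slowly rotating through different planes are not excluded, and uniqueness does not follow.

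Your boundedness argument uses $Q_R\approx\pi_V$ and $|\pi_{V^\perp}x|=o(|x|)$ at all scales, so it inherits this gap. It is also not closed on its own terms. An inequality with error $\epsilon|x|^2$ on $\{|x|\le\eta R\}$ only recovers $|\pi_{V^\perp}x|=o(|x|)$. Testing at two points of $\Sigma$ near $\pm$ a large vector of $V$ either controls at most two components of $\E_R[z]$ (if the points are fixed) or carries an uncontrolled error (if they grow with $R$).

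\textbf{The missing idea is the order of limits.} It makes the $x$-dependence of the error in \eqref{eq:quadratic-expansion} harmless, and it needs only the given sequence $R_j$.
\begin{enumerate}
\item Along $R_j$ you have $Q_{R_j}\to\pi_V$ and $\E_{R_j}[1]\to1$.
\item Trivially $\E_R[z]\in L:=\spanop\Sigma$. Testing \eqref{eq:quadratic-expansion} at finitely many fixed points of $\Sigma$ forming a basis of $L$ therefore bounds $\E_{R_j}[z]$. After a subsequence, $\E_{R_j}[z]\to b$, with $b$ independent of $x$.
\item Letting $j\to\infty$ with $x\in\Sigma$ fixed gives the error-free inequality $\bigl|2b\cdot x-|\pi_{V^\perp}x|^2\bigr|\le8\mathcal D(\mu)$ for every $x\in\Sigma$.
\item Because this inequality has no error term, you may insert points $x_j\in\Sigma$ with $x_j/R_j\to\pm v$, $v\in V$, given by convergence of supports along $R_j$. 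Dividing by $R_j$ gives $\pi_Vb=0$, and hence $|\pi_{V^\perp}x-b|^2\le|b|^2+8\mathcal D(\mu)$ on $\Sigma$.
\item Full convergence is then a consequence rather than an input. Every blow-down, at any scales and centers in $\Sigma$, is $m$-uniform by Lemma~\ref{lem:coarse} and supported in $V$, hence equals $\Hh^m\res V$.
\end{enumerate}
With this order, your argument for \eqref{eq:projection-net} goes through. In the case $\mathcal D(\mu)=0$, read off the convergence of $\E_R[z]$ from its scalar products with a basis of $L$ together with $\E_R[z]\in L$; points of $\Sigma$ need not span $\R^n$. The fact $\pi_Vb=0$ is already available from step 4 and does not come from a spanning family.
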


The last identity can be seen as an analogue of \cite[Theorem~3.6(1)]{Preiss} (in the special case $q=1$).

\begin{proof}
Using \eqref{coarse.off} for the rescalings $\mu_j$, it is straightforward to check that
$Q_{R_j}^\mu=Q_1^{\mu_j}\to Q_1^{\mathcal{H}^m\res V}=\pi_V$ (note that the analogue for $\E_{R_j}[z]$ does not hold, since
this term is not normalized). Evaluating
\eqref{eq:quadratic-expansion} at any $x\in\Sigma$, we then obtain a bound $|\E_{R_j}[z]\cdot x|\le C_x$
independent of $j$. Using finitely many such vectors $x$ forming a basis of $L:=\spanop\Sigma$,
we obtain a uniform bound for $\pi_L\E_{R_j}[z]$, which equals $\E_{R_j}[z]$ since trivially
$\E_{R_j}[z]\in L$.
We can then extract a subsequential limit
$b:=\lim_{j\to\infty}\E_{R_j}[z]\in L$.

For each fixed $x\in\Sigma$, writing $x=p+q$ with $p\in V$ and $q\in V^\perp$, using \eqref{eq:quadratic-expansion} we deduce
\begin{equation}\label{eq:approximate-cylinder}
 |2b\cdot x-|q|^2|=\lim_{j\to\infty}|2\E_{R_j}[z]\cdot x+x\cdot Q_{R_j}x-|x|^2\E_{R_j}[1]|\le8\mathcal{D}(\mu).
\end{equation}
In particular, $2b\cdot x\ge-8\mathcal{D}(\mu)$. For every $v\in V$, the fact that $\supp\mu_j\to V$
provides $x_j\in\Sigma$ with $x_j/R_j\to v$.
Since $b\cdot x_j/R_j\ge-4\mathcal{D}(\mu)/R_j$, in the limit we obtain $b\cdot v\ge0$.
Replacing $v$ with $-v$ shows that $b\perp V$, and therefore \eqref{eq:approximate-cylinder}
reads $||q-b|^2-|b|^2|=|2b\cdot q-|q|^2|\le8\mathcal{D}(\mu)$; in particular,
\begin{equation}\label{bd.vertical}
 |\pi_{V^\perp}x-b|^2\le |b|^2+8\mathcal{D}(\mu)\quad\text{for }x\in\Sigma,
\end{equation}
which shows that $\pi_{V^\perp}\Sigma$ is bounded.

Any other blow-down $\nu$, including those centered at varying support
points, is thus supported in $V$. By Lemma~\ref{lem:coarse} it is
also $m$-uniform, which easily implies that $\nu=\Hh^m\res V$
(indeed, by differentiation with respect to $\Hh^m\res V$,
we see that $\nu=\Hh^m\res\supp\nu$ and, if we had $\supp\nu\neq V$,
taking a relative ball $B\subset V\setminus\supp\nu$ with $\partial B$ touching $\supp\nu$ at a point $x_0$, we would have $\nu(B_r(x_0))\le\omega_m r^m/2+o(r^m)$, a contradiction). This proves \eqref{eq:full-flat-limit}.

The set $\pi_V\Sigma$ is closed, since $\Sigma$ is closed and the normal coordinates are bounded.
If \eqref{eq:projection-net} fails, choose $p_j\in V$ with
$d_j:=\dist(p_j,\pi_V\Sigma)\to\infty$, nearest points
$p'_j\in \pi_V\Sigma$, and $z'_j\in\Sigma$ with $\pi_Vz_j'=p'_j$. As established earlier, the measures
$S_{z'_j,d_j}\mu$ converge to $\Hh^m\res V$,
but their projected supports omit the open unit ball centered at the unit vector
$(p_j-p'_j)/d_j$, giving a contradiction in the limit.

If $\mathcal{D}(\mu)=0$, then \eqref{bd.vertical} is an equality and gives
\eqref{eq:zero-cylinder}. Also, evaluating \eqref{eq:quadratic-expansion} along any other sequence $R_j'\to\infty$, we get
$$0=\lim_{j\to\infty}4(R_j')^2(G_{R_j'}^\mu(x)-G_{R_j'}^\mu(0))
=\lim_{j\to\infty}2\E_{R_j'}[z]\cdot x-|\pi_{V^\perp}x|^2.$$
Thus, any subsequential limit of $\E_{R_j'}[z]$ has uniquely determined scalar products with all $x\in\Sigma$,
i.e., with all vectors in $L$; since it belongs to $L$, it is uniquely determined, showing that the full limit
$\lim_{R\to\infty}\E_R[z]$ exists.
\end{proof}

\section{Rigid case and proof of Theorem \ref{thm:zero}}\label{sec:function}

On $\R^m$, for any radius $r>0$, let $k_r:\R^m\to(0,\infty)$ denote the usual Gaussian kernel and
let $\PP_r$ denote the corresponding convolution operator:
\begin{equation}\label{eq:kernel}
 k_r(x):=(4\pi r^2)^{-m/2}e^{-|x|^2/(4r^2)},\quad
 \PP_rf:=k_r*f.
\end{equation}
This aligns with the convention, used throughout the paper, to use scale parameters (which have the dimension of length).
With this notation, $(\PP_{\sqrt{t}})_{t>0}$ is just the heat semigroup.
In particular, we have the identities
\begin{equation}\label{eq:scale-calculus}
 \PP_r\PP_sf=\PP_{\sqrt{r^2+s^2}}f,\quad
 \partial_r(\PP_rf)=2r\Delta (\PP_rf),\quad
 \int_{\R^m} |x|^2k_r(x)\,dx=2mr^2.
\end{equation}

\begin{lemma}\label{lem:function}
Let $f\in H^1_{\rm loc}(\R^m;\R^\ell)$ with $|f|=M$ a.e.\ for some constant $M\ge0$. Assume, for all $r>0$, that
\begin{gather}
 \PP_rf(0)=0,\quad
 \PP_r(|Df|^2)(0)\le\frac{M^2}{r^2},\label{eq:function-hypotheses}\\
 \lim_{s\to\infty} s\PP_r(f\cdot\PP_sf)(0)=0.
 \label{eq:function-correlation}
\end{gather}
Then $f\equiv0$.
\end{lemma}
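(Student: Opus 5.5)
The plan is to turn the three hypotheses into information about the Hermite (Ornstein--Uhlenbeck) chaos decomposition of $f$ with respect to the Gaussian weight $k_r$. Then let $r\to\infty$ and reach a contradiction with $|f|=M$ unless $M=0$.

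\textbf{Step 1: the first-order coefficients vanish at large scales.} Set $B_s:=s\,\PP_s(Df)(0)$. By Cauchy--Schwarz and \eqref{eq:function-hypotheses}, $|B_s|\le s(\PP_s|Df|^2(0))^{1/2}\le M$.

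Since $\PP_sf(0)=0$, Taylor expand $\PP_sf$ around $0$. Its derivatives of order $\ell$ at scale $s$ are controlled by $M s^{-\ell}$, using the $H^1$ bound and the Gaussian kernel. This gives $\PP_sf(x)=x\cdot\nabla\PP_sf(0)+O_x(M/s^2)$, with $\nabla\PP_sf(0)=\PP_s(Df)(0)=B_s/s$.

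Inserting this into $\PP_r(f\cdot\PP_sf)(0)$, and using $|f|\le M$ to integrate the error against $k_r$, yields
\[
s\,\PP_r(f\cdot\PP_sf)(0)=\int_{\R^m}k_r(x)\,f(x)\cdot(B_s x)\,dx+O_r(M^2/s).
\]
On the other hand, $Dk_r(x)=-\frac{x}{2r^2}k_r(x)$ gives $\int k_r(x)\,f(x)\otimes x\,dx=2r^2\,\PP_r(Df)(0)=2rB_r$. So \eqref{eq:function-correlation} says $\lim_{s\to\infty}B_r:B_s=0$ for every $r>0$.

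Now take $s_j\to\infty$ with $B_{s_j}\to B$ (possible since $|B_s|\le M$). Then $B_r:B=0$ for all $r$. Choosing $r=s_j$ and letting $j\to\infty$ gives $|B|^2=0$. Hence $B_s\to0$ as $s\to\infty$.

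\textbf{Step 2: a spectral inequality.} Fix $r$ and expand $f=\sum_{k\ge0}f_k$ in Hermite chaoses of $L^2(k_r\,dx)$. On the $k$-th chaos the Gaussian Dirichlet form has eigenvalue $k/(2r^2)$.

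The hypothesis $\PP_rf(0)=0$ says $f_0=0$. Thus $\sum_{k\ge1}\|f_k\|^2=\int k_r|f|^2=M^2$, while $\sum_k \frac{k}{2r^2}\|f_k\|^2=\int k_r|Df|^2\le M^2/r^2$. Subtracting twice the first identity from $2r^2$ times the second,
\[
\sum_{k\ge3}(k-2)\|f_k\|^2\le\|f_1\|^2 .
\]
Moreover $\|f_1\|$ is comparable to $r\,|\PP_r(Df)(0)|=|B_r|$, since $f_1$ is the linear part computed in Step 1. By Step 1, $\|f_1\|\to0$ as $r\to\infty$, and therefore also $\sum_{k\ge3}(k-2)\|f_k\|^2\to0$.

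\textbf{Step 3: compactness and the contradiction.} Consider the rescalings $g_r(y):=f(ry)$. They satisfy $|g_r|=M$ and $\int k_1|Dg_r|^2\le M^2$, and their chaos decomposition in $L^2(k_1)$ is the rescaled one of $f$. By the compact embedding of the Gaussian Sobolev space (compactness of the resolvent of the Ornstein--Uhlenbeck operator), a subsequence converges strongly in $L^2(k_1)$ and a.e. to some $g$ with $|g|=M$ a.e.

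By Step 2, all chaos components of $g$ other than the second vanish: the first tends to $0$ and the tail $\sum_{k\ge3}$ tends to $0$. So $g$ is a vector of degree-two Hermite polynomials. If $g\ne0$, its top homogeneous part $P$ is a nonzero quadratic form, so $|g|^2$ is a polynomial whose degree-four part $|P|^2$ is nonzero. This contradicts $|g|^2\equiv M^2$. Hence $g=0$, so $M=0$ and $f\equiv0$.

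\textbf{Main obstacle.} I expect the delicate point to be Step 1. One must control the Taylor remainder of $\PP_sf$ at order $O(1/s^2)$ using only $|f|\le M$ and the weighted gradient bound, uniformly enough to justify the asymptotic identity for $s\,\PP_r(f\cdot\PP_sf)(0)$. The exact Gaussian identities for the first chaos (the relation between $f_1$ and $\PP_r(Df)(0)$) should make the remaining steps routine.
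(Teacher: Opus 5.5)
Your proof is correct. Step 1 is the paper's first step ($B_s$ is the paper's $\Lambda_s=sD(\PP_sf)(0)$). You close it with the a priori bound $|B_s|\le M$ and the diagonal choice $r=s_j$, whereas the paper uses that the span of all $\Lambda_r$ is finite-dimensional; both work. The point you flag as the main obstacle is routine: $\|D^2\PP_sf\|_{L^\infty}\le C(m)M/s^2$ follows at once from $|f|\le M$ and $\|D^2k_s\|_{L^1}\le C(m)s^{-2}$, and the $H^1$ bound is not needed there.

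The genuine difference is in the second half.

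\textbf{The paper's route.} It first extracts a limit $g$ of $f(r_j\,\cdot)$ by Rellich compactness, with $\PP_1g(0)=0$, $D(\PP_1g)(0)=0$ and $\PP_1(|Dg|^2)(0)\le M^2$. It then argues with the heat flow. For $U_\sigma=\PP_{\sqrt{1-\sigma}}g$, the quantities $J(\sigma)=\PP_{\sqrt\sigma}(|U_\sigma|^2)(0)$ and $K(\sigma)=\PP_{\sqrt\sigma}(|DU_\sigma|^2)(0)$ satisfy $J'=2K$ and $K''\ge0$. Convexity and the endpoint values force $K(\sigma)=\sigma M^2$. Hence $D^3U_\sigma=0$, so $U_\sigma$ is a bounded polynomial of degree at most two, hence constant.

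\textbf{The relation between the two.} In your notation, $J(\sigma)=\sum_k\sigma^k\|g_k\|^2$ by Mehler's formula. So the paper's convexity argument is exactly your inequality $\sum_{k\ge1}(k-2)\|g_k\|^2\le0$, written in semigroup form.

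\textbf{What each route buys.} The paper's route is self-contained: it uses only the heat-semigroup identities already set up, with no Hermite expansions or Gaussian Sobolev spaces. Yours makes transparent why $M^2/r^2$ is the critical constant: it is the eigenvalue $k/(2r^2)$ at $k=2$. It also applies the inequality at every scale before any limit is taken.

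In fact your version does not need the compact embedding at all. Since $\sum_{k\ge3}\|(g_r)_k\|^2\le\|(g_r)_1\|^2=2|B_r|^2\to0$ and the second chaos is finite-dimensional, $g_r$ already converges in $L^2(k_1\,dx)$, along a subsequence, to an element of the second chaos. Your final step, via the nonzero degree-four part of $|g|^2$, is equivalent to the paper's observation that a bounded quadratic polynomial is constant.
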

\begin{proof}
Let $\Lambda_r:=rD(\PP_rf)(0)$. We first claim that
$$\lim_{r\to\infty}\Lambda_r=0.$$
Differentiating the convolution and using $|f|\le M$, we get
\begin{equation}\label{eq:first-linear-average}
 D(\PP_rf)(0)=\frac1{2r^2}\int_{\R^m} k_r(x)f(x)\otimes x\,dx,\quad \|D^2\PP_r f\|_{L^\infty}\le C(m)M/r^2.
\end{equation}
A simple Taylor expansion then gives $\PP_sf(x)=D(\PP_sf)(0)[x]+O(M|x|^2/s^2)$.
Inserting this into \eqref{eq:function-correlation}, we get
\[
 o_r(s^{-1})=\int_{\R^m}k_r(x)f(x)\cdot\PP_sf(x)\,dx=\frac{2r}{s}\ip{\Lambda_r}{\Lambda_s}+O(M^2r^2/s^2),
\]
where we used \eqref{eq:first-linear-average} to rewrite $D(\PP_sf)(0)\cdot\int_{\R^m}k_r(x)f(x)\otimes x\,dx$
as $\frac{2r}{s}\ip{\Lambda_r}{\Lambda_s}$.
For each fixed $r>0$, this implies $\ip{\Lambda_r}{\Lambda_s}\to0$ as $s\to\infty$.
Since the span of all matrices $\Lambda_r$ is finite-dimensional, this proves that indeed $\Lambda_s\to0$
as $s\to\infty$.

Setting $f_r(x):=f(rx)$, since $|f_r|\le M$ and
\begin{equation}\label{eq:rescaled-function}
	\PP_1(|Df_r|^2)(0)=r^2\PP_r(|Df|^2)(0)\le M^2,
\end{equation}
and moreover $k_1>0$ is bounded below by a positive constant on every compact set,
by Rellich compactness there exist radii $r_j\to\infty$ such that
$$f_{r_j}\to g\quad\text{in }L^2_{loc}(\R^m;\R^\ell)$$
and a.e., for some $g\in H^1_{loc}(\R^m;\R^\ell)$.
In particular, we have $|g|=M$ a.e. The convergence is also
strong in $L^2(k_1\,dx)$, since
\[
 \int_{|x|>R}|f_{r_j}-g|^2k_1(x)\,dx
 \le4M^2\int_{|x|>R}k_1(x)\,dx\to0
 \quad\text{as }R\to\infty
\]
uniformly in $j$. Since the assignments $v\mapsto \PP_1v(0)=\int_{\R^m} k_1v\,dx$ and
$v\mapsto D(\PP_1v)(0)=\frac12\int_{\R^m} k_1v\otimes x\,dx$
are continuous on this weighted space, we have
\begin{equation}\label{eq:unit-limit}
 |g|=M\text{ a.e.},\quad \PP_1g(0)=\lim_{j\to\infty}\PP_1f_{r_j}(0)=\lim_{j\to\infty}\PP_{r_j}f(0)=0,
\end{equation}
and also, recalling the definition of $\Lambda_r$, the previous bound \eqref{eq:rescaled-function}, and lower
semicontinuity of the Dirichlet energy,
\begin{equation}\label{eq:unit-limit2}
 D(\PP_1g)(0)=\lim_{j\to\infty} D(\PP_1f_{r_j})(0)
 =\lim_{j\to\infty}\Lambda_{r_j}=0,\quad
 \PP_1(|Dg|^2)(0)\le M^2.
\end{equation}

To conclude, we will show that $M=0$. For $\sigma\in(0,1)$, let
\[
 U_\sigma:=\PP_{\sqrt{1-\sigma}}(g),\quad
 J(\sigma):=\PP_{\sqrt\sigma}(|U_\sigma|^2)(0),\quad
 K(\sigma):=\PP_{\sqrt\sigma}(|DU_\sigma|^2)(0).
\]
Since
$\partial_\sigma \PP_{\sqrt\sigma}=\Delta \PP_{\sqrt\sigma}$ and
$\partial_\sigma U_\sigma=-\Delta U_\sigma$, the identity
$\Delta|v|^2=2v\cdot\Delta v+2|Dv|^2$ gives
\begin{equation}\label{eq:J-derivative}
J'(\sigma)=\PP_{\sqrt\sigma}(\Delta|U_\sigma|^2+\partial_\sigma|U_\sigma|^2)(0)
=\PP_{\sqrt\sigma}(2|DU_\sigma|^2)(0)=2K(\sigma),
\end{equation}
and similarly
\begin{equation}
  K'(\sigma)=2\PP_{\sqrt\sigma}(|D^2U_\sigma|^2)(0),\quad K''(\sigma)=4\PP_{\sqrt\sigma}(|D^3U_\sigma|^2)(0)\ge0.
\end{equation}
While these hold for $\sigma\in(0,1)$, at the endpoints we can use \eqref{eq:unit-limit}--\eqref{eq:unit-limit2} and the fact that $|U_\sigma|^2\to M^2$ in $L^1_{loc}(\R^m)$ as $\sigma\to 1$ (with $|U_\sigma|^2\le M^2$), obtaining
\[
 J(0^+)=K(0^+)=0,\quad J(1^-)=M^2.
\]
By Jensen's inequality, we can also bound
\[
 K(\sigma)\le\PP_{\sqrt\sigma}\PP_{\sqrt{1-\sigma}}(|Dg|^2)(0)
 =\PP_1(|Dg|^2)(0)\le M^2.
\]
Since $K$ is convex with $K(0^+)=0$ and $K(1^-)\le M^2$, we must have $K(\sigma)\le\sigma M^2$,
and therefore
\[
 M^2=\int_0^1 J'(\sigma)\,d\sigma=\int_0^1 2K(\sigma)\,d\sigma
 \le\int_0^1 2\sigma M^2\,d\sigma=M^2.
\]
The equality forces $K(\sigma)=\sigma M^2$, hence $K''(\sigma)=0$ and thus
$D^3U_\sigma=0$. We deduce that, for all $\sigma\in(0,1)$, $U_\sigma$ is a bounded polynomial of degree
at most two, hence constant. Consequently, by definition of $K$, we have $K(\sigma)=0$.
Since we obtained $K(\sigma)=\sigma M^2$ earlier, we arrive at $M=0$.
\end{proof}

We next record the form in which this lemma will be applied.

\begin{corollary}\label{lem:fiber}
Let $(\kappa_x)_{x\in\R^m}$ be a (weakly measurable) family of probability measures on the sphere
$\mathbb{S}:=\{v\in\R^\ell:|v|=M\}$ and let $h(x):=\int_{\mathbb{S}} v\,d\kappa_x(v)$.
Suppose that for each $r>0$ we have
\begin{equation} \label{eq:fiber-row}
2r^2A_r(x)=M^2-v\cdot \PP_rh(x)\quad\text{for }dx\otimes\kappa_x(v)\text{-a.e. }(x,v),
\end{equation}
for a suitable bounded smooth function $A_r:\R^m\to[0,\infty)$,
and assume moreover the identities
\begin{equation}\label{weird.ids}
 \PP_rA_s=A_{\sqrt{r^2+s^2}},\quad
 \PP_rh(0)=0,\quad
 A_r(0)=\frac{M^2}{2r^2}
\end{equation}
for every $r,s>0$. Then $\kappa_x=\delta_0$ for a.e.\ $x\in\R^m$.
\end{corollary}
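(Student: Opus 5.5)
The plan is to first show that each $\kappa_x$ is a Dirac mass at $h(x)$ with $|h|=M$, then check the hypotheses of Lemma \ref{lem:function} for $f:=h$, $\ell$ as given, and conclude $M=0$. Once $M=0$, the sphere $\mathbb S$ reduces to $\{0\}$, so $\kappa_x=\delta_0$.

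\textbf{Step 1: integrated identity and $|h|=M$.} Integrating \eqref{eq:fiber-row} in $v$ against $\kappa_x$ gives, for a.e.\ $x$,
$$2r^2A_r=M^2-h\cdot\PP_rh.$$
Apply $\PP_\rho$ at $0$ and use \eqref{weird.ids}:
\[
\PP_\rho(M^2-h\cdot\PP_rh)(0)=2r^2A_{\sqrt{\rho^2+r^2}}(0)=\frac{r^2M^2}{\rho^2+r^2}.
\]
As $r\to0$ we have $\PP_rh\to h$ a.e.\ with $|\PP_rh|\le M$, so dominated convergence gives $\PP_\rho(M^2-|h|^2)(0)=0$. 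Since $|h|\le M$ and $k_\rho>0$, this forces $|h|=M$ a.e. A probability measure on the sphere of radius $M$ whose barycenter has norm $M$ must be a Dirac mass, by strict convexity of the ball. Hence $\kappa_x=\delta_{h(x)}$ a.e.

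\textbf{Step 2: hypotheses of Lemma \ref{lem:function}.} The condition $\PP_rh(0)=0$ is given. For the correlation condition, the formula above yields
\[
s\,\PP_r(h\cdot\PP_sh)(0)=s\Bigl(M^2-\frac{s^2M^2}{r^2+s^2}\Bigr)=\frac{sr^2M^2}{r^2+s^2}\to0\quad\text{as }s\to\infty.
\]
For the Dirichlet bound, use $|h(x)-h(y)|^2=2M^2-2h(x)\cdot h(y)$, valid since $|h|=M$. This gives
\[
\int k_r(x)\,(M^2-h\cdot\PP_th)(x)\,dx=\frac12\iint k_r(x)k_t(x-y)|h(x)-h(y)|^2\,dx\,dy,
\]
and by the above the left-hand side equals $t^2M^2/(r^2+t^2)$. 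Dividing by $t^2$ and letting $t\to0$, the right-hand side should converge to $\int k_r|Dh|^2$. Heuristically, $\int k_t(z)|h(x+z)-h(x)|^2dz\approx 2t^2|Dh(x)|^2$, using the second moment $2t^2$ per coordinate from \eqref{eq:scale-calculus}. The resulting bound $\frac12\cdot 2t^2\int k_r|Dh|^2\le t^2M^2/r^2$ would give exactly $\PP_r(|Dh|^2)(0)\le M^2/r^2$.

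\textbf{Main obstacle.} The delicate step is making the last limit rigorous for a merely bounded $h$: it must simultaneously prove $h\in H^1_{loc}$ and give the liminf inequality. I would do this with a difference-quotient/Fatou argument. On each ball $B$, the uniform bound
$$t^{-2}\iint_{B\times\R^m}k_t(x-y)|h(x)-h(y)|^2\lesssim_B 1$$
(using that $k_r$ is bounded below on $B$) implies bounded $L^2$ difference quotients after averaging over directions, hence $h\in H^1(B)$. Then lower semicontinuity, which one could check by mollifying $h$ and using convexity of the functional, gives
$$\int k_r|Dh|^2\le\liminf_{t\to0}\frac{1}{t^2}\cdot\frac12\iint k_r(x)k_t(x-y)|h(x)-h(y)|^2.$$
The asymmetry of the weight $k_r(x)$ only contributes lower-order terms, since $k_r$ is smooth.

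\textbf{Conclusion.} With all hypotheses verified, Lemma \ref{lem:function} gives $h\equiv0$. Since $|h|=M$ a.e., this means $M=0$, and therefore $\kappa_x=\delta_0$ for a.e.\ $x$.
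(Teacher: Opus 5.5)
Your proof is correct. It matches the paper except in one step. The correlation computation is the same as the paper's. Your derivation of $|h|=M$ from the exact value $\PP_\rho(2r^2A_r)(0)=r^2M^2/(\rho^2+r^2)$ is a slightly more direct version of the paper's argument, which uses $A_s\ge0$ to bound $A_s$ in $L^1_{loc}$ and deduces $r^2A_r\to0$ in $L^1_{loc}$.

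The real difference is the Dirichlet bound, where the paper never takes a nonlocal-to-local limit $t\to0$. It uses that the functions $z_i/(\sqrt2\,r)$ are orthonormal in $L^2(k_r(z)\,dz)$ and that $\int\frac{z_i}{\sqrt2\,r}(h(x+z)-h(x))k_r(z)\,dz=\sqrt2\,r\,\partial_i\PP_rh(x)$. Bessel's inequality together with $\int|h(x+z)-h(x)|^2k_r(z)\,dz=4r^2A_r(x)$ then gives $|D(\PP_rh)|^2\le2A_r$. The semigroup law yields $\PP_r(|D\PP_sh|^2)(0)\le2A_{\sqrt{r^2+s^2}}(0)=M^2/(r^2+s^2)$. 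Letting $s\to0$ gives $h\in H^1_{loc}$ and $\PP_r(|Dh|^2)(0)\le M^2/r^2$ by lower semicontinuity. The Gaussian thus acts as both mollifier and nonlocal kernel at the same scale, so no Bourgain--Brezis--Mironescu-type fact is needed.

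Your route is more generic and can be made rigorous, but it needs more care than you indicate.
\begin{itemize}
\item The ``averaged difference quotient'' criterion for $H^1$ is not the textbook one and needs its own proof, e.g.\ by Plancherel after a cutoff.
\item The weight asymmetry is not literally lower order, since $k_r(x+w)/k_r(x)$ is unbounded as $|x|\to\infty$.
\end{itemize}
A clean fix is to run your Jensen/mollification idea at a slightly larger scale. For $r'=r/\sqrt{1-\delta}$ one has $k_r(x+w)\le(r'/r)^me^{C_\delta|w|^2/r^2}k_{r'}(x)$. Take $h_\epsilon:=h*\varphi_\epsilon$ with $\varphi_\epsilon$ supported in $B_\epsilon(0)$. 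Jensen's inequality, your identity at scale $r'$, and the limit $t\to0$ (harmless for the smooth $h_\epsilon$) then give $\int k_r|Dh_\epsilon|^2\le(r'/r)^me^{C_\delta\epsilon^2/r^2}M^2/r'^2$. Letting $\epsilon\to0$ and then $r'\downarrow r$ yields both $h\in H^1_{loc}$ and the bound at once. This makes the separate difference-quotient step unnecessary.
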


\begin{proof}
Averaging \eqref{eq:fiber-row} in $v$ gives
\begin{equation}\label{eq:fiber-average}
 2r^2A_r=M^2-h\cdot \PP_rh.
\end{equation}
Note that the left-hand side goes to zero in $L^1_{loc}$ as $r\to0$, since by \eqref{weird.ids} we have
\[
 \int k_{\sqrt{4-s^2}}(x)A_s(x)\,dx=\PP_{\sqrt{4-s^2}}A_s(0)=A_2(0)
 \quad\text{for }s\in(0,2)
\]
and $k_{\sqrt{4-s^2}}$ is bounded below by a positive constant on any given compact set
when $s\in(0,1)$, which implies that $A_s$ is locally uniformly bounded in $L^1$ for $s\in(0,1)$.
Also, since $\PP_rh\to h$ in $L^2_{loc}$ (as $|h|\le M$), we have $h\cdot\PP_r h\to |h|^2$ in $L^1_{loc}$.
Hence, \eqref{eq:fiber-average} yields
$|h(x)|=M$ at a.e.\ $x$, at which points it follows that
\[
 \int_{\mathbb{S}} |v-h(x)|^2\,d\kappa_x(v)=M^2-|h(x)|^2=0
\]
and thus $\kappa_x=\delta_{h(x)}$ a.e. In particular, using also \eqref{eq:fiber-average}, for a.e.\ $x$ we get
\begin{equation}\label{eq:fiber-increment}
 \int_{\R^m} |h(x+z)-h(x)|^2k_r(z)\,dz=2M^2-2h(x)\cdot\int_{\R^m} h(x+z)k_r(z)\,dz=4r^2A_r(x).
\end{equation}

To obtain a gradient bound, we use the fact that the functions $z_i/(\sqrt2\,r)$ are
orthonormal in $L^2(k_r(z)\,dz)$, and note that
\[
 \int_{\R^m} \frac{z_i}{\sqrt2\,r}(h(x+z)-h(x))k_r(z)\,dz
 =\int_{\R^m} \frac{z_i}{\sqrt2\,r}h(x+z)k_r(z)\,dz
  =\sqrt2\,r\,\partial_{x_i}\PP_rh(x).
\]
Bessel's inequality (applied to each component of the target $\R^\ell$) therefore gives
\begin{equation}\label{eq:bessel}
 |D(\PP_rh)|^2\le\frac{1}{2r^2}\int_{\R^m}|h(x+z)-h(x)|^2k_r(z)\,dz=2A_r.
\end{equation}
Using \eqref{weird.ids} again, for all $r,s>0$ we deduce that
\[
 \PP_r(|D(\PP_sh)|^2)(0)
 \le \PP_r(2A_s)(0)
 =2A_{\sqrt{r^2+s^2}}(0)=\frac{M^2}{r^2+s^2}.
\]
Letting $s\to0$, since $\PP_s h\to h$ in $L^2_{loc}$ we deduce that
\begin{equation}\label{eq:fiber-energy}
 h\in H^1_{loc},\quad \PP_r(|Dh|^2)(0)\le M^2/r^2.
\end{equation}
Finally, from \eqref{eq:fiber-average} we get
\[
 \PP_r(h\cdot \PP_s h)(0)
 =\PP_r(M^2-2s^2A_s)(0)
 =M^2-2s^2A_{\sqrt{r^2+s^2}}(0)
 =M^2\frac{r^2}{r^2+s^2}.
\]
We can then apply Lemma~\ref{lem:function} with $f:=h$.
\end{proof}

\begin{proof}[Proof of Theorem~\ref{thm:zero}]
Without loss of generality, we can assume $0\in\Sigma:=\supp\mu$.
Since $\mu$ is fixed, in the sequel we will not indicate explicitly the dependence of error terms on $\mu$ and $n$.
Applying \eqref{eq:zero-cylinder} we find
$b\in V^\perp$ such that, setting $M:=|b|$, we have
\[
 |\pi_{V^\perp}x-b|^2=M^2\quad\text{for }x\in\Sigma.
\]
%
Let $\phi(x):=\pi_{V^\perp}x-b$. The previous identity says that $|\phi(x)|=M$ on all of $\Sigma$,
while Lemma \ref{lem:tube} gives $b=\lim_{R\to\infty}\E_R[z]$ and thus
\begin{equation}\label{avg.norm.to.zero}
\E_R[\phi(z)]\to0
\end{equation}
(recall that $\E_R[\cdot]$ was introduced in \eqref{eq:gaussian-moments}).

Let us identify $V$ with $\R^m=\R^m\times\{0\}\subset\R^n$.
For a scale $L\ge1$, we blow down the first $m$ components: let $\zeta_L:\R^n\to V\times V^\perp=\R^n$
be given by $\zeta_L(x):=(\pi_Vx/L,\phi(x))$ and
\begin{equation}\label{eq:anisotropic-measures}
 \eta_L:=L^{-m}(\zeta_L)_*\mu,
 \quad \rho_L:=L^{-m}(\pi_V/L)_*\mu=(\pi_V)_*\eta_L.
\end{equation}
By \eqref{eq:full-flat-limit}, we have $\rho_L\weak dx'$, where we use $x'$ to denote points in $\R^m$.
More quantitatively, the coarse upper bound \eqref{coarse.off} and the fact that $|\pi_{V^\perp}x-b|=M$ for $x\in\Sigma$ give
\begin{equation}\label{eq:marginal-growth}
 \rho_L(B_r(x'))\le C(n)(Lr+M)^m/L^m=C(n)(r+M/L)^m\quad\text{for }x'\in\R^m,\ r\ge R_0/L.
\end{equation}
After a subsequence, we can therefore extract a limit of $\eta_L$ and disintegrate it with respect to $\pi_V$:
\begin{equation}\label{eq:young-limit}
 \eta_L\weak dx'\otimes\kappa_{x'},
\end{equation}
where each $\kappa_{x'}$ is a probability measure on $\{u\in V^\perp:|u|=M\}$.
We will show that, in fact, $\kappa_{x'}=\delta_0$ for a.e.\ $x'$.

For $(x_0',u_0)\in\supp\eta_L$, the assumption $\mathcal{D}(\mu)=0$ at the original support
point $(Lx_0',u_0+b)\in\Sigma$ and scale $Lr$ reads
\begin{equation}\label{eq:zero-row}
 G_{L,r}(x_0',u_0):=\int_{\R^n} k_r(x'-x_0')e^{-|u-u_0|^2/(4L^2r^2)}\,d\eta_L(x',u)=1+o_r(L^{-2}).
\end{equation}
For $r$ in a compact subset of $(0,\infty)$, the error is uniform in
$r$ and in the support point. Letting $$g_{L,r}:\R^m\to(0,\infty),\quad g_{L,r}:=\PP_r\rho_L,$$ since on $\supp\eta_L$ it holds
$|u-u_0|\le2M$, on $L^{-1}\pi_V\Sigma$ it follows that
\begin{equation}\label{g.bds}
 1+o_r(L^{-2})\le g_{L,r}\le
 e^{M^2/(L^2r^2)}(1+o_r(L^{-2}));
\end{equation}
recall that, by \eqref{eq:projection-net}, $L^{-1}\pi_V\Sigma$ is a $(C(\mu)L^{-1})$-net in $\R^m$.
Also, \eqref{eq:marginal-growth} bounds all spatial
derivatives of $g_{L,r}$ uniformly in $L$, locally uniformly in $r>0$.
We claim that, as a consequence,
\begin{equation}\label{eq:net-interpolation}
 \norm{g_{L,r}-1}_{L^\infty}\le C(\mu,r)L^{-2}.
\end{equation}
Indeed, we can assume $m>0$, since when $m=0$ the claim follows from \eqref{g.bds}. Let $f:=g_{L,r}-1$ and $\delta:=C(\mu)L^{-1}$, so that $L^{-1}\pi_V\Sigma$ is a $\delta$-net. Fix $x'\in\mathbb R^m$ and choose a unit vector $e$ parallel to $\nabla f(x')$ (we can take any unit vector if $\nabla f(x')=0$). By the net property, there exist $p_\pm\in L^{-1}\pi_V\Sigma$ such that
$|p_\pm-(x'\pm \delta e)|\le \delta$.
Consequently,
$$|p_\pm-x'|\le2\delta,\quad
\nabla f(x')\cdot(p_+-x')\ge0,\quad
\nabla f(x')\cdot(p_--x')\le0.$$
Since by Taylor’s formula we have
$$f(p_\pm)=f(x')+\nabla f(x')\cdot(p_\pm-x')+\varepsilon_\pm,
\quad |\varepsilon_\pm|\le\frac{H|p_\pm-x'|^2}{2}\le2H\delta^2,$$
where $H:=\|D^2f\|_{L^\infty}$, using \eqref{g.bds} and the signs of the linear terms we obtain
$$
o_r(L^{-2})-2H\delta^2
\le f(p_-)-2H\delta^2
\le f(x')
\le f(p_+)+2H\delta^2
\le O_r(L^{-2})+2H\delta^2,
$$
which proves the claim \eqref{eq:net-interpolation} since the Hessian
of $f=g_{L,r}-1$ is bounded uniformly in $L,r$ (for $r$ in a compact set).

We now reconstruct the ingredients needed to apply Corollary \ref{lem:fiber}.
Let
$$h(x'):=\int_{V^\perp} u\,d\kappa_{x'}(u),$$
so that $|h|\le M$. We also define
\begin{equation}\label{eq:horizontal-defect}
 A_{L,r}:=L^2(g_{L,r}-1),\quad
 h_{L,r}(x_0'):=\int_{\R^n} k_r(x'-x_0')u\,d\eta_L(x',u).
\end{equation}
The identities $\PP_sA_{L,r}=A_{L,\sqrt{r^2+s^2}}$, $\partial_rA_{L,r}=2r\Delta A_{L,r}$,
and \eqref{eq:net-interpolation} give uniform bounds (in $L$) for any derivative of $A_{L,r}$ with respect to $x',r$, on compact subsets of $\{r>0\}$.
Hence, after a subsequence, we have $A_{L,r}\to A_r$ locally smoothly in
$(x',r)\in\R^m\times(0,\infty)$. The $L^\infty$ bound on $A_{L,r}$ for fixed $r>0$ allows us to interchange
the limit with Gaussian convolutions, giving
\begin{equation}\label{eq:limit-semigroup}
 \PP_sA_r=\lim_{L\to\infty}\PP_s(L^2(\PP_r\rho_L-1))=\lim_{L\to\infty}L^2(\PP_{\sqrt{r^2+s^2}}\rho_L-1)=A_{\sqrt{r^2+s^2}}.
\end{equation}
Likewise, we have $h_{L,r}\to \PP_rh$ locally smoothly, by definition of $h_{L,r}$ and
\eqref{eq:marginal-growth}.

Finally, expanding the term $e^{-|u-u_0|^2/(4L^2r^2)}=1-\frac{|u-u_0|^2}{4L^2r^2}+o_r(L^{-2})$ in \eqref{eq:zero-row}, we get
$$1+o_r(L^{-2})=G_{L,r}(x_0',u_0)=g_{L,r}(x_0')-\int_{\R^n}k_r(x'-x_0')\frac{|u-u_0|^2}{4L^2r^2}\,d\eta_L(x',u)
+o_r(L^{-2})$$
for any $(x_0',u_0)\in\supp\eta_L$, which rearranges to
\begin{equation}\label{eq:zero-expanded-row}
 2r^2A_{L,r}(x_0')=M^2g_{L,r}(x_0')-u_0\cdot h_{L,r}(x_0')+o_r(1),
\end{equation}
where $o_r(1)$ denotes an error which goes to zero as $L\to\infty$, uniformly on compact subsets of $\{r>0\}$.
As a consequence, for any fixed $r>0$, in the limit $L\to\infty$ we obtain
\begin{equation}\label{eq:zero-fiber-identity}
 2r^2A_r(x')=M^2-u\cdot\PP_rh(x')
 \quad\text{for }(x',u)\in\supp (dx'\otimes\kappa_{x'}).
\end{equation}
In particular, averaging in $u$ we get $2r^2A_r(x')=M^2-h(x')\cdot\PP_rh(x')$ for a.e.\ $x'\in\R^m$
and, since $|h|,|\PP_rh|\le M$, by continuity we have $A_r\ge0$ everywhere.

To conclude, we have
$$h_{L,r}(0)=\int_{\R^n} k_{Lr}(x')u\,d\mu(x',u+b)
=\E_{Lr}[e^{|\pi_{V^\perp}z|^2/(4L^2r^2)}\phi(z)];$$
since $\pi_{V^\perp}\Sigma$ is bounded, this equals $\E_{Lr}[\phi(z)]+O_r(L^{-2})$,
which tends to zero as $L\to\infty$ by \eqref{avg.norm.to.zero}. Therefore,
\begin{equation}\label{P.r.zero}
\PP_rh(0)=\lim_{L\to\infty}h_{L,r}(0)=0.
\end{equation}
The original support point $0\in\Sigma$ corresponds to
$(0,-b)\in\supp\eta_L$ for every $L>0$. Evaluating
\eqref{eq:zero-fiber-identity} at this point (note that \eqref{eq:zero-fiber-identity} holds also at limits of points in $\supp\eta_L$, as $L\to\infty$) and using the fact that $\PP_rh(0)=0$, we also get
\[
 A_r(0)=\frac{M^2}{2r^2}.
\]
Hence, we are under the assumptions of Corollary~\ref{lem:fiber}, which gives $M=0$, as required.
\end{proof}

The following immediate consequence was first obtained in \cite[Corollary~3.16]{Preiss}, as one of the key rigidity steps towards Theorem \ref{thm:preiss}.

\begin{corollary}\label{cor:uniform-flat}
An $m$-uniform measure $\mu$ with one flat blow-down is flat.
\end{corollary}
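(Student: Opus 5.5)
The plan is to deduce the statement directly from Theorem~\ref{thm:zero}, by checking its two hypotheses for an $m$-uniform measure $\mu$ with a flat blow-down $\Hh^m\res V$ (so we may assume $\mu\neq0$).

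First, we verify that $\mathcal D(\mu)=0$. For $x\in\supp\mu$, the layer-cake rewriting used in the proof of Lemma~\ref{lem:coarse} gives
\[
G_R^\mu(x)=\pi^{-m/2}(4R^2)^{-m/2-1}\int_0^\infty e^{-\sigma/(4R^2)}\,\mu(B_{\sqrt\sigma}(x))\,d\sigma .
\]
Since $\mu$ is $m$-uniform, $\mu(B_{\sqrt\sigma}(x))=\omega_m\sigma^{m/2}$. The integral is therefore the same as for $\Hh^m\res\R^m$ at a point of the plane, which equals $1$. Hence $G_R^\mu(x)=1$ for every $x\in\supp\mu$ and every $R>0$; in particular $G_R^\mu(x)<\infty$ and \eqref{eq:zero-defect} holds trivially. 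Equivalently, one can compute the Laplace transform of $\omega_m\sigma^{m/2}$ directly, which gives $\pi^{m/2}\tau^{-m/2-1}$ and yields the normalization $G_R^\mu\equiv1$.

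Second, the flat blow-down hypothesis is assumed. Theorem~\ref{thm:zero} then yields $\supp\mu\subseteq W$ for some affine $m$-plane $W$ parallel to $V$.

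It remains to show $\mu=\Hh^m\res W$. Here we repeat the argument in the proof of Lemma~\ref{lem:tube}. Since $\mu$ is $m$-uniform and supported in the $m$-plane $W$, differentiating with respect to $\Hh^m\res W$ shows that the density is $1$ on $\supp\mu$, so $\mu=\Hh^m\res\supp\mu$. If $\supp\mu\ne W$, we take a relative ball $B\subset W\setminus\supp\mu$ whose boundary touches $\supp\mu$ at some $x_0$. This gives $\mu(B_r(x_0))\le\omega_mr^m/2+o(r^m)$, contradicting uniformity. (The support is closed and nonempty, so such a touching ball exists.)

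No step is a serious obstacle: the only point needing care is the normalization computation showing $G_R^\mu\equiv1$ on the support, and this is already implicit in Lemma~\ref{lem:coarse}.
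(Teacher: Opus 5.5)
Your proposal is correct and is the paper's own argument: $m$-uniformity gives $G_R^\mu\equiv1$ on $\supp\mu$, so $\mathcal D(\mu)=0$. Theorem~\ref{thm:zero} then places $\supp\mu$ in an affine $m$-plane, and the differentiation/touching-ball argument from the proof of Lemma~\ref{lem:tube} identifies $\mu$ with $\Hh^m$ restricted to that plane.
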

\begin{proof}
By $m$-uniformity, we compute that $G_R^\mu=1$ on $\supp\mu$. By Theorem~\ref{thm:zero}, $\supp\mu$ is included in an affine $m$-plane. Using $m$-uniformity again, we see that $\mu$ must agree with the Hausdorff measure on this plane
(see, e.g., the proof of Lemma \ref{lem:tube} for the standard argument).
\end{proof}

\section{Application to Preiss's theorem}\label{sec:preiss}

We now show how to apply Corollary \ref{cor:uniform-flat} in order to obtain Theorems \ref{thm:preiss}
and \ref{preiss.bis}. All the facts presented in this section are well known, but they are collected here
in order to make the paper more self-contained. We will follow closely the presentations of \cite{DeLellis,MattilaBook},
which provide streamlined and insightful expositions of these topics. We will also give references to earlier works where such tools originated.

Given $\alpha\ge0$, let $\U^\alpha$ denote the class of centered $\alpha$-uniform measures $\nu$, i.e., positive Radon measures on $\R^n$ such that $0\in\supp\nu$ and
\[
 \nu(B_r(y))=\omega_\alpha r^\alpha
 \quad\text{for }y\in\supp\nu,\ r>0,
\]
where $\omega_\alpha:=\pi^{\alpha/2}/\Gamma(1+\alpha/2)$, which is the volume of the $\alpha$-dimensional unit ball when $\alpha\in\N$.

Consider a measure $\mu$ as in the statement of Theorem \ref{thm:preiss}.
For $\mu$-a.e.\ $x$, using the notation $S_{x,r}\mu:=r^{-\alpha}(z\mapsto(z-x)/r)_*\mu$, the following standard facts hold (see, e.g., \cite[Proposition~3.4]{DeLellis} or \cite[Lemma~14.7]{MattilaBook} for the inclusion $\mathcal{T}_x\mu\subseteq\U^\alpha$ in (ii),
and \cite[Lemma~6.6]{DeLellis} or \cite[Theorem~14.16]{MattilaBook} for (iii), respectively):
\begin{itemize}
\item[(i)] $\theta(x):=\lim_{r\to0}\frac{\mu(B_r(x))}{\omega_\alpha r^\alpha}$ exists in $(0,\infty)$;
\item[(ii)] the family $S_{x,r}\mu/\theta(x)$ is sequentially precompact as
$r\to0$, namely for any sequence $r_j\to0$ there exists a subsequence (not relabeled) such that
$\lim_{j\to\infty}S_{x,r_j}\mu/\theta(x)$ exists in the weak-* topology,
and the set $\mathcal{T}_x\mu$ of all such limits satisfies
$$\mathcal{T}_x\mu\subseteq\U^\alpha;$$
\item[(iii)] if $\nu\in\mathcal{T}_x\mu$ then, for any $y\in\supp\nu$ and $s>0$, also $S_{y,s}\nu\in\mathcal{T}_x\mu$.
\end{itemize}

Elements in $\mathcal{T}_x\mu$ are called \emph{tangent measures} at $x$ (note that normalization conventions are slightly inconsistent throughout the literature).

\begin{remark}\label{metrization}
Note that, if we assume without loss of generality that $\mu(\R^n)<\infty$, then for $\mu$-a.e.\ $x$ (namely, for $x$ where (i) holds) there exists $C_x>0$ such that $\mu(B_R(x))\le C_x\theta(x)R^\alpha$ for all $R>0$.
In particular, the rescalings $S_{x,r}\mu/\theta(x)$ (and thus also the elements of $\mathcal{T}_x\mu$) belong to the set
\begin{equation}\label{metrizable}
\mathcal{M}_{C_x}^\alpha:=\{\nu\in\mathcal{M}(\R^n):\nu(B_R(0))\le C_xR^\alpha\text{ for all }R>0\},
\end{equation}
where $\mathcal{M}(\R^n)$ denotes the set of all positive Radon measures on $\R^n$. By standard functional analysis,
the weak-* topology of $\mathcal{M}(\R^n)$, restricted to $\mathcal{M}_{C_x}^\alpha$, can be metrized,
yielding a compact metric space.
\end{remark}

The following fact was first obtained in \cite[Theorem~1]{M64}.

\begin{proposition}\label{lem:flat-tangent}
The exponent $\alpha$ is an integer in $\{0,\dots,n\}$ and, at every point $x$ where (i)--(iii) above hold, $\mathcal T_x\mu$ contains a flat measure.
\end{proposition}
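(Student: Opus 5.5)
We follow Marstrand's classical strategy, rephrased through tangent measures. Fix a point $x$ where (i)--(iii) hold, and pick any $\nu\in\mathcal T_x\mu\subseteq\U^\alpha$. The plan is to show, by induction on dimension, that $\mathcal T_x\mu$ contains a measure supported on a linear subspace, and that such a measure is forced to be flat.

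The first step is to produce a tangent measure whose support is "flat at a point". Since $\nu$ is $\alpha$-uniform and nonzero, its support $\Sigma_\nu$ is a closed set. If $\Sigma_\nu\ne\R^n$, take a ball $B_\rho(y_0)$ in the complement whose boundary touches $\Sigma_\nu$ at some point $y$. By (iii), the blow-ups $S_{y,s}\nu$ (with $s\to0$) lie in $\mathcal T_x\mu$, and by Remark \ref{metrization} they have a subsequential limit $\nu_1\in\mathcal T_x\mu$. Supports converge locally in Hausdorff distance, thanks to the uniform lower density bound $\omega_\alpha r^\alpha$ at support points. Hence $\supp\nu_1$ avoids an open half-space $\{z\cdot e>0\}$ and contains $0$.

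The second step pushes $\nu_1$ into the hyperplane $e^\perp$. Blowing $\nu_1$ down at $0$ along $s\to\infty$ keeps us inside the closed half-space. We then need to get rid of the mass in $\{z\cdot e<0\}$. For this we use uniformity via a first-moment argument: for $\nu_1$ uniform, the Gaussian (or ball) averages centered at $0$ and at $te$ with $t>0$ small are comparable. Concretely, $\nu_1(B_r(te))\le\nu_1(B_r(0))$ is forced whenever $te\notin\supp\nu_1$, while a Taylor expansion of $\int e^{-|z-te|^2/(4r^2)}\,d\nu_1$ shows it increases in $t$ unless $\int (z\cdot e)\,e^{-|z|^2/4r^2}d\nu_1=0$. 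Since $z\cdot e\le0$ on the support, this integral must vanish, so $\supp\nu_1\subseteq e^\perp$. We then iterate inside $e^\perp$, which is legitimate because $\nu_1\in\mathcal T_x\mu$ and (iii) is preserved. Dimension strictly decreases, so after finitely many steps we reach $\nu_k\in\mathcal T_x\mu$ whose support is an entire linear subspace $W$ of some dimension $d$. Here "entire" means that the touching-ball step is no longer available inside $W$.

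The final step identifies $\nu_k$. Since $\nu_k$ has full support in $W$ and is $\alpha$-uniform, it is comparable to $\mathcal H^d\res W$ at all scales, which forces $\alpha=d$. Differentiation with respect to $\mathcal H^d\res W$ then gives $\nu_k=\mathcal H^d\res W$. Therefore $\alpha=m:=d\in\{0,\dots,n\}$ and $\nu_k$ is flat. The case $\Sigma_\nu=\R^n$ is handled the same way, giving $\alpha=n$ directly.

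The main obstacle is the second step: making rigorous that a uniform measure supported in a closed half-space with $0$ in its support must lie in the boundary hyperplane. I would first try the Gaussian first-moment computation above. The fallback is the classical Marstrand/Mattila argument (\cite[Theorem~1]{M64}), which uses the density $\omega_\alpha r^\alpha$ at nearby support points to reach a contradiction with the mass lying strictly on one side.
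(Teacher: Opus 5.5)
Your overall architecture (touching ball, blow-up at the contact point, dimension reduction inside the current subspace, identification via packing/covering) matches the paper's. However, Step~2 has a genuine gap.

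You compare averages centered at $0$ and at $te$. But $te$ lies in the open half-space $\{z\cdot e>0\}$, hence outside $\supp\nu_1$, and uniformity gives no information there. Indeed, for $z\in\supp\nu_1$ one has $|z-te|^2=|z|^2-2t\,z\cdot e+t^2>|z|^2$. So both $\nu_1(B_r(te))\le\nu_1(B_r(0))$ and the behaviour of $t\mapsto\int e^{-|z-te|^2/(4r^2)}\,d\nu_1$ are automatic. The $t$-derivative of the latter at $t=0$ is $\frac1{2r^2}\int (z\cdot e)\,e^{-|z|^2/(4r^2)}\,d\nu_1\le0$, so the average \emph{decreases}, consistently with everything else. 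Nothing forces the first moment to vanish. Hence $\supp\nu_1\subseteq e^\perp$ is not proved, and the blow-down of $\nu_1$ you mention plays no role.

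The missing idea is to use the constancy of $H(y):=\int e^{-|z-y|^2}\,d\nu_1(z)$ at \emph{support} points $y$ near $0$. This gives $0=H(y)-H(0)=2\ip{w}{y}+O(|y|^2)$ for $y\in\supp\nu_1$, where $w:=\int z\,e^{-|z|^2}\,d\nu_1(z)$, and it yields a dichotomy.

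If $w=0$, then $\int\ip{z}{e}\,e^{-|z|^2}\,d\nu_1=0$ with $\ip{z}{e}\le0$ on the support. This gives $\supp\nu_1\subseteq e^\perp$, which is your intended conclusion.

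If $w\neq0$, you get nothing about $\nu_1$ itself. This case necessarily occurs whenever $\supp\nu_1\not\subseteq e^\perp$, since then $\ip{w}{e}<0$. Still, near $0$ the support lies within $O(|y|^2)$ of $w^\perp$. So every blow-\emph{up} of $\nu_1$ at $0$ (a limit as $s\to0$, which lies in $\mathcal T_x\mu$ by (iii) and closedness) is supported in $w^\perp$.

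Either way you obtain a tangent measure supported in a hyperplane of the current subspace, since $w$ and $e$ lie in it. That is all your induction needs. The stronger claim you set out to prove — that a uniform measure in a closed half-space, with $0$ in its support, lies in the boundary hyperplane — is not what this argument gives, and you do not need it.

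With this fix your iteration is exactly the paper's proof. The paper phrases the iteration as choosing $\nu\in\mathcal T_x\mu$ minimizing $\dim\spanop(\supp\nu)$ and deriving a contradiction from the dichotomy.
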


\begin{proof}Fix $x$ where (i)--(iii) hold.
Let $m$ be the minimal dimension of $\spanop(\supp\mu')$, as $\mu'\in\mathcal T_x\mu$ varies,
and let $\nu\in\mathcal T_x\mu$ be a minimizer, so that $m=\dim\spanop(\supp\nu)$. Let us identify $\spanop(\supp\nu)$ with $\R^m$.

We must have $\alpha\le m$, since otherwise we could cover any compact $K\subset\R^m$
with a family of balls $\{B_r(x_i)\}_{i\in I}$ with $|I|=O_K(r^{-m})$ and deduce that
$\nu(K)\le\omega_\alpha(2r)^\alpha\cdot|I|=O_K(r^{\alpha-m})$ (note that any ball $B_r(x)$ has $\nu(B_r(x))\le\omega_\alpha(2r)^\alpha$, since if it intersects $\supp\nu$ at $x'$ then $B_r(x)\subseteq B_{2r}(x')$), which would give $\nu(K)=0$ once we let $r\to0$, a contradiction.

Assume now that $\alpha<m$; we will reach a contradiction also in this case, proving that $\alpha=m$ is an integer.
We cannot have $\supp\nu=\R^m$, since otherwise we could place at least $c(m)r^{-m}$ disjoint balls of any given radius $r\in(0,1)$ inside $B_1(0)$, giving $\nu(B_1(0))\ge c(m)\omega_\alpha r^{\alpha-m}$ by uniformity,
a contradiction since $r^{\alpha-m}$ blows up as $r\to0$. We can then choose $p\not\in\supp\nu$
and a nearest point $p'\in\supp\nu$, so that the ball $\bar B_{|p'-p|}(p)$ touches $\supp\nu$ at $p'$.
Letting $v:=p'-p$, it follows that any tangent $\eta\in\mathcal T_{p'}\nu\subseteq\mathcal T_x\mu$
is supported in the half-space $\{z:\ip{z}{v}\ge0\}$.

By uniformity, the smooth function
\[
 H(y):=\int_{\R^m} e^{-|z-y|^2}\,d\eta(z)
\]
is constant on $\supp\eta$. Since $H(y)-H(0)=2\ip{w}{y}+O(|y|^2)$,
with $w:=\int_{\R^m} ze^{-|z|^2}\,d\eta(z)$,
if $w\neq0$ then any tangent at zero is supported in $w^\perp$ (as $H(y)-H(0)$ vanishes on $\supp\eta$).
On the other hand, if $w=0$ then
$$\int_{\R^m}\ip{z}{v}e^{-|z|^2}\,d\eta(z)=0,$$
giving $\ip{z}{v}=0$ for $z\in\supp\eta$ (since we had $\ip{z}{v}\ge0$), and hence $\supp\eta\subseteq v^\perp$.
In both cases, we contradict the minimality of $m$.

Thus, $\alpha=m$ is an integer and, by the same argument, $\supp\nu$ equals its span, an $m$-dimensional plane,
proving that $\nu$ is flat (see, e.g., the proof of Lemma~\ref{lem:tube}).
\end{proof}

The cases $m=0$ and $m=n$ in Theorem~\ref{thm:preiss} are very classical: when $m=0$, it is easy to check that
the measure is atomic, i.e., a finite or countable sum of (multiples of) Dirac masses, while
if $m=n$ then (by differentiation with respect to the Lebesgue measure $\LL^n$) we have $\mu\ll\LL^n$, matching the definition of rectifiable measure in these borderline cases. We
therefore assume $1\le m\le n-1$ for the rest of this section.

Given $\nu\in\U^m$, in the sequel we use again the notation $\E_R[\cdot]=\E_R^\nu[\cdot]$ from
\eqref{eq:gaussian-moments} and set
\[
 Q(\nu):=\frac12\E_1[z\otimes z]\ge0.
\]
Using uniformity, we compute that $\E_R[1]=1$ and
$\E_R[|z|^2]=2mR^2$, so that $\tr Q(\nu)=m$. Let us record how facts proved earlier simplify in this case, starting from the following special case of \cite[Theorem~3.6(1)]{Preiss}.

\begin{lemma}\label{lem:quadrics}
If $\nu\in\U^m$ and $\lambda$ is a blow-down, i.e., $S_{R_j}\nu\weak\lambda$ for a sequence of scales
$R_j\to\infty$, then there exists $b\in\spanop(\supp\nu)$ such that
\begin{gather}
 |x|^2=2b\cdot x+x\cdot Q(\lambda)x
       \quad\text{for }x\in\supp\nu,\label{eq:original-quadric}\\
 |y|^2=y\cdot Q(\lambda)y
       \quad\text{for }y\in\supp\lambda.\label{eq:limiting-quadric}
\end{gather}
\end{lemma}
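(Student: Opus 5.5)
The plan is to specialize the argument of Lemma~\ref{lem:tube} to the uniform case, where $G_R^\nu\equiv1$ on $\supp\nu$ and $\E_R[1]=1$ for all $R$. First I would note that $\nu$ has $\mathcal D(\nu)=0$, so Lemma~\ref{lem:coarse} applies with $R_0$ arbitrary and Lemma~\ref{taylor} holds. Since $0\in\supp\nu$, the difference $G_R^\nu(x)-G_R^\nu(0)$ vanishes for every $x\in\supp\nu$. Moreover $Q_{R}^\nu=Q_1^{S_R\nu}$, and the rescalings obey the uniform bound \eqref{coarse.off} with exponent $m$. Hence, along the sequence $R_j$, weak-* convergence plus Gaussian tail control (the annular decomposition) gives $Q_{R_j}^\nu\to Q(\lambda)$.

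Next I would evaluate \eqref{eq:quadratic-expansion} at a fixed $x\in\supp\nu$ and $R=R_j$. This gives
\[
0=2\E_{R_j}[z]\cdot x+x\cdot Q_{R_j}x-|x|^2+O_x(R_j^{-1}).
\]
Choosing finitely many $x\in\supp\nu$ that span $L:=\spanop(\supp\nu)$ bounds $\E_{R_j}[z]\in L$ uniformly, exactly as in Lemma~\ref{lem:tube}. Passing to a subsequence, $\E_{R_j}[z]\to b\in L$. Taking limits then yields \eqref{eq:original-quadric} for every $x\in\supp\nu$. The subsequence is harmless, since $\lambda$ is fixed and $Q(\lambda)$ does not depend on it.

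For \eqref{eq:limiting-quadric}, I would use the convergence of supports from Lemma~\ref{lem:coarse}. Given $y\in\supp\lambda$, there are points $x_j\in\supp\nu$ with $x_j/R_j\to y$. Dividing \eqref{eq:original-quadric} at $x_j$ by $R_j^2$, the term $2b\cdot x_j/R_j^2$ tends to $0$, and we obtain $|y|^2=y\cdot Q(\lambda)y$.

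The only delicate point is justifying $Q_{R_j}^\nu\to Q(\lambda)$: the integrand $z\otimes z\,e^{-|z|^2/4}$ is not compactly supported. Weak-* convergence must therefore be combined with the uniform growth bound $S_{R_j}\nu(B_r)\le Cr^m$ to control tails, which is routine. Everything else is a direct transcription of the proof of Lemma~\ref{lem:tube}. The argument does not use flatness of the blow-down, and the approximate inequality \eqref{eq:approximate-cylinder} becomes the exact identity \eqref{eq:original-quadric} because $G_R^\nu\equiv1$ holds exactly.
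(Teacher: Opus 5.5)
Your proposal is correct and follows essentially the same route as the paper. Both arguments run through the same steps:
\begin{itemize}
\item use the expansion \eqref{eq:quadratic-expansion} with $G_R^\nu\equiv1$ on the support and $\E_R[1]=1$;
\item get $Q(S_{R_j}\nu)\to Q(\lambda)$ from the uniform growth bound and Gaussian decay;
\item bound $\E_{R_j}[z]\in\spanop(\supp\nu)$ via spanning support points, as in Lemma~\ref{lem:tube}, and take a subsequential limit $b$;
\item divide \eqref{eq:original-quadric} at support points $x_j$ with $x_j/R_j\to y$ by $R_j^2$.
\end{itemize}
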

\begin{proof}
Since $G_R^\nu\equiv1$ on the support, \eqref{eq:quadratic-expansion}
becomes
\[
 2\E_R[z]\cdot x
 +x\cdot Q(S_R\nu)x=|x|^2+O_x(R^{-1})
 \quad\text{for }x\in\supp\nu.
\]
By uniformity of $S_{R_j}\nu$ and fast decay of the Gaussian weight,
we have $Q(S_{R_j}\nu)\to Q(\lambda)$. As in the proof of Lemma \ref{lem:tube}, we get
$\E_{R_j}[z]\to b$, so that
\eqref{eq:original-quadric} holds at every $x\in\supp\nu$.
For $y\in\supp\lambda$, choose $x_j\in\supp\nu$ such that
$x_j/R_j\to y$. Using these points in \eqref{eq:original-quadric} and dividing by $R_j^2$, we deduce
\eqref{eq:limiting-quadric}.
\end{proof}

On the set of positive Radon measures $\mathcal{M}(\R^n)$, consider the functional $\mathcal F:\mathcal{M}(\R^n)\to[0,\infty]$ given by
\begin{equation}\label{eq:flatness-functional}
 \mathcal F(\eta):=\min_{V\in\mathrm{Gr}_m(\R^n)}
       (4\pi)^{-m/2}\int_{\R^n} e^{-|z|^2/4}\dist(z,V)^2\,d\eta(z),
\end{equation}
where $\mathrm{Gr}_m(\R^n)$ is the set of all $m$-planes passing through the origin.
It is easy to check that $\mathcal F|_{\U^m}$ is finite, continuous, and vanishes precisely at the flat measures.
Let $\mathcal Q$ be the class of $\lambda\in\U^m$ satisfying
\eqref{eq:limiting-quadric} with their own matrix $Q(\lambda)$.
By Lemma~\ref{lem:quadrics}, every blow-down of a uniform measure
belongs to $\mathcal Q$. The following is essentially the fundamental rigidity result \cite[Theorem 3.14(1)]{Preiss}
observed by Preiss for blow-downs.

\begin{lemma}\label{lem:gap}
There exists $\delta_0(n)>0$ such that any
$\lambda\in\mathcal Q$ with $\mathcal F(\lambda)\le\delta_0$
must be flat.
\end{lemma}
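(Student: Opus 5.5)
The plan is a compactness argument by contradiction, reduced to a linear-algebra statement about the matrix $Q(\lambda)$ together with the quadric constraint \eqref{eq:limiting-quadric}. Suppose there are $\lambda_k\in\mathcal Q$ with $\mathcal F(\lambda_k)\to0$ that are not flat. Each $\lambda_k$ is $m$-uniform, so the $\lambda_k$ lie in a weak-* compact class. Since $\mathcal F$ is continuous on $\U^m$ and vanishes exactly on flat measures, a subsequence converges to $\Hh^m\res V$ for some $V\in\mathrm{Gr}_m(\R^n)$. By the fast decay of the Gaussian weight, $Q(\lambda_k)\to\pi_V$, and supports converge locally in the Hausdorff sense (Lemma~\ref{lem:coarse}). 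Diagonalize $I-Q(\lambda_k)$ in an orthonormal basis. It has $m$ eigenvalues $\varepsilon_{k,i}\to0$, with eigenvectors spanning a plane $V_k\to V$, and $n-m$ eigenvalues $1-\mu_{k,j}\to1$, with eigenvectors spanning $W_k=V_k^\perp$. Writing $y=(y',w)\in V_k\times W_k$, condition \eqref{eq:limiting-quadric} says that $\supp\lambda_k$ lies in the quadratic cone
\[
 \sum_j(1-\mu_{k,j})w_j^2=-\sum_i\varepsilon_{k,i}y_i^2 .
\]

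The first key step is the trace constraint. Since $\tr Q=m$, we get $\sum_i\varepsilon_{k,i}=\sum_j\mu_{k,j}\ge0$. In addition, $\mu_{k,j}=\frac12\E_1[w_j^2]$ and $1-\varepsilon_{k,i}=\frac12\E_1[y_i^2]$. If all $\varepsilon_{k,i}\ge0$, the cone forces $w=0$ on the support. Then $\supp\lambda_k\subseteq V_k$, and uniformity gives $\lambda_k=\Hh^m\res V_k$ by the argument in the proof of Lemma~\ref{lem:tube}, which is a contradiction. So some $\varepsilon_{k,i}<0$. On the support we then have $|w|^2\le(1+o(1))\,\varepsilon_k^-|y'|^2$, where $\varepsilon_k^-:=\max_i\varepsilon_{k,i}^-\to0$. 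Integrating against the Gaussian and using $\E_1[|y'|^2]\approx2m$ gives
\[
 \sum_j\mu_{k,j}\le C\varepsilon_k^- .
\]

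The second step is to renormalize. Set $t_k:=\max_i|\varepsilon_{k,i}|>0$ and consider the anisotropically rescaled measures $(y',w)\mapsto(y',w/\sqrt{t_k})$, in the spirit of Section~\ref{sec:function}. The normalized coefficients $\varepsilon_{k,i}/t_k$ converge to some $a_i\in[-1,1]$ with $\max_i|a_i|=1$. The rescaled supports lie in the fixed cone $\{|w|^2=-\sum_ia_iy_i^2\}$ in the limit, which requires $a$ to have at least one negative entry. I then expect to extract a limit Young measure $y'\mapsto\kappa_{y'}$, which for a.e.\ $y'$ is supported on the sphere of radius $\bigl(-\sum_ia_iy_i^2\bigr)_+^{1/2}$ in $W$. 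Passing to the limit in the identities $G_r^{\lambda_k}\equiv1$ on the support, expanded to order $t_k$ exactly as in \eqref{eq:zero-row}--\eqref{eq:zero-fiber-identity}, should give fiber identities of the type \eqref{eq:fiber-row}. Combined with $\sum_ia_i=\lim\,t_k^{-1}\sum_j\mu_{k,j}=\frac12\int|u|^2\,d\kappa\ge0$, this should contradict the existence of a negative $a_i$: the weight $\bigl(-\sum_ia_iy_i^2\bigr)_+$ must be nontrivial wherever $\kappa$ charges $w\ne0$, while the second-moment identities should force the positive part of $a$ to be balanced exactly by the vertical mass.

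The main obstacle is this last step: turning the trace/second-moment balance into a genuine contradiction. At second order the relation $\sum_i\varepsilon_i=\sum_j\mu_j$ alone is consistent with cones such as Kowalski--Preiss-type ones. The smallness of $\mathcal F$ must therefore be used through the anisotropic limit, and this is where Preiss needs higher moments. My first attempt will be to feed the limiting Young-measure identities into the heat-semigroup rigidity of Lemma~\ref{lem:function} and Corollary~\ref{lem:fiber}. If that fails, I would fall back on expanding $G^{\lambda_k}_1(x)=1$ to fourth order in the vertical variable along the negative-$\varepsilon$ directions, to get a sign contradiction.
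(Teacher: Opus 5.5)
Your reduction is correct up to the point where some $\varepsilon_{k,i}$ must be negative. The proposal then stops exactly where the proof has to happen. The final contradiction is only anticipated (``I expect'', ``should give'', with a fourth-order expansion as fallback), so as written nothing is proved.

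The suggested route is also not set up to work. Corollary~\ref{lem:fiber} needs fibers on a sphere of \emph{fixed} radius $M$, together with $\PP_rh(0)=0$ and $A_r(0)=M^2/(2r^2)$. Nothing in your limit produces these for fibers on spheres of radius $\bigl(-\sum_ia_iy_i^2\bigr)_+^{1/2}$, which vary with $y'$.

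The worry behind that route is misplaced too. The second-order relations do admit non-flat cones such as the Kowalski--Preiss one, but the lemma only asserts that these stay a definite distance from flat. For that, what you already have suffices: $Q(\lambda_k)\to\pi_V$ and local Hausdorff convergence $\supp\lambda_k\to V$. The higher moments in Preiss's work belong to the other rigidity step (flatness at infinity implies flatness, which the paper replaces by Theorem~\ref{thm:zero}), not to this lemma.

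The missing observation is that the trace identity you wrote down already finishes the proof, once you look at the \emph{largest} $\varepsilon_{k,i}$ instead of the negative ones. Since $\sum_i\varepsilon_{k,i}=\sum_j\mu_{k,j}\ge0$ and some $\varepsilon_{k,i}<0$, the maximum $\delta_k:=\varepsilon_{k,i_0}$ is positive, and every $\varepsilon_{k,i}\ge-(m-1)\delta_k$. As $1-\mu_{k,j}>0$ for large $k$, your cone equation gives, on $\supp\lambda_k$,
\[
 \delta_k\,y_{i_0}^2\le-\sum_{i\ne i_0}\varepsilon_{k,i}y_i^2\le(m-1)\,\delta_k\sum_{i\ne i_0}y_i^2 .
\]
The factor $\delta_k\to0$ cancels, so $\supp\lambda_k$ lies in the cone $\{y_{i_0}^2\le(m-1)\sum_{i\ne i_0}y_i^2\}$, whose aperture does not depend on $k$. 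This cone misses $B_\rho(e_{k,i_0})$ for some $\rho(m)>0$, where $e_{k,i_0}\in V_k$ is the corresponding unit eigenvector. Up to a subsequence $e_{k,i_0}\to e\in V$, and the local Hausdorff convergence $\supp\lambda_k\to V$ provides points of $\supp\lambda_k$ converging to $e$, which is a contradiction.

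You were one step away in your own normalization. The conditions $\sum_ia_i\ge0$ and $\max_i|a_i|=1$ force some $a_{i_0}>0$. Meanwhile the unrescaled inequality $\sum_i\varepsilon_{k,i}y_i^2\le0$ on $\supp\lambda_k$, divided by $t_k$ and evaluated at such points, gives $a_{i_0}\le0$. Equivalently, your limiting marginal is $dy'$ on all of $V$, yet no mass can sit over the nonempty open cone $\{\sum_ia_iy_i^2>0\}$.

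This is the paper's argument, phrased there with the ordered eigenvalues of $Q(\lambda_j)$ and $\alpha_{j,m}=1-\delta_j$. No anisotropic rescaling, Young measure, or heat-semigroup rigidity is needed.
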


\begin{proof}
Assuming that the statement is false, there exist non-flat measures $\lambda_j\in\mathcal Q$ with
$\mathcal F(\lambda_j)\to0$. After a subsequence, they converge to a limit $\bar\lambda$ with $\mathcal F(\bar\lambda)=0$, which is then flat. In other words, there exists an $m$-plane $V$ such that
$$\lambda_j\weak\Hh^m\res V,\quad Q(\lambda_j)\to \pi_V.$$
Let us list the eigenvalues of $Q(\lambda_j)$ in decreasing order, as
$\alpha_{j,1}\ge\cdots\ge\alpha_{j,n}\ge0$, and choose an
orthonormal eigenbasis $\{e_{j,1},\ldots,e_{j,n}\}$ of $Q(\lambda_j)$.

If $\alpha_{j,m}\ge1$, then the fact that $\sum_k\alpha_{j,k}=\tr Q(\lambda_j)=m$ (with $\alpha_{j,k}\ge0$) forces
$\alpha_{j,1}=\dots=\alpha_{j,m}=1$ and $\alpha_{j,m+1}=\dots=\alpha_{j,n}=0$.
Thus, in this case $Q(\lambda_j)$ is the orthogonal projection to an $m$-plane $V_j$ and, by
\eqref{eq:limiting-quadric}, we have $\supp\lambda_j\subseteq V_j$, contradicting our assumption that $\lambda_j$ is not flat. We can then assume $\alpha_{j,m}=1-\delta_j$ for some $\delta_j>0$.

Using again the fact that $\tr Q(\lambda_j)=m$, we get
\[
 \alpha_{j,k}\le m-(m-1)(1-\delta_j)=1+(m-1)\delta_j\quad\text{for }k<m
\]
(since all other summands in $\sum_{k=1}^m\alpha_{j,k}\le m$ are at least $1-\delta_j$), as well as
$\alpha_{j,k}<1$ for $k>m$. Thus, given $x\in\supp\lambda_j$,
writing $x_k:=\langle x, e_{j,k}\rangle$ for simplicity, using the assumption $\lambda_j\in\mathcal{Q}$ we find
\[
 0=\sum_{k=1}^n(\alpha_{j,k}-1)x_k^2
 \le\delta_j\biggl((m-1)\sum_{k<m}x_k^2-x_m^2\biggr).
\]
We deduce that $\supp\lambda_j\subseteq\{x\in\R^n:x_m^2\le(m-1)\sum_{k<m}x_k^2\}$,
a set which excludes $B_\rho(e_{j,m})$ for some small enough $\rho(m)>0$. However, this contradicts
the fact that $\supp\lambda_j$ converges to an $m$-plane $V$ containing $\lim_{j\to\infty}e_{j,m}$ (after a subsequence).
\end{proof}

We are now ready to deduce Theorem \ref{thm:preiss} from Corollary \ref{cor:uniform-flat}.

\begin{proof}[Proof of Theorem~\ref{thm:preiss}]
By restricting $\mu$ to a ball, we can assume without loss of generality that $\mu(\R^n)<\infty$.
Fix a point $x\in\supp\mu$ where the properties (i)--(iii) listed above hold.
Besides integrality of the dimension, Proposition~\ref{lem:flat-tangent} provides a flat measure $\zeta\in\mathcal T_x\mu$.
Now assume by contradiction that there exists a non-flat $\xi\in\mathcal{T}_x\mu$. Thus, letting
\[
 \mu_r:=S_{x,r}\mu/\theta(x),
\]
there exist sequences $r_j\to0$ and $s_j\to0$, with $r_{j+1}<s_j<r_j$, such that
$$\mu_{r_j}\weak\zeta,\quad \mu_{s_j}\weak\xi.$$
The proof of the theorem is based on a continuity argument, exploiting the fact that $\mathcal{F}(\mu_r)$
is a continuous function of $r>0$ (note that $\mathcal{F}$ is finite and continuous on the class $\mathcal{M}_{C_x}^m$ from \eqref{metrizable}).

Combining Lemmas~\ref{lem:quadrics} and \ref{lem:gap} with
Corollary~\ref{cor:uniform-flat} proves that, whenever $\nu\in\U^m$ and $\liminf_{R\to\infty}\mathcal F(S_R\nu)<\delta_0$, in fact $\nu$ must be flat.
Fixing any $0<\delta<\min\{\delta_0,\mathcal F(\xi)\}$, since $\mathcal{F}(\mu_{s_j})\to\mathcal{F}(\xi)$
we must have $\mathcal{F}(\mu_{s_j})>\delta$ eventually, and similarly $\mathcal{F}(\mu_{r_j})<\delta$.
Thus, for $j$ large enough we can let $\sigma_j:=\max\{t\in[s_j,r_j]:\mathcal{F}(\mu_t)\ge\delta\}$.
After a subsequence, $\mu_{\sigma_j}\weak\upsilon\in\U^m$ with
$\mathcal F(\upsilon)=\delta$. We claim that $\mathcal F(S_R\upsilon)\le\delta$ for any $R\ge1$ as well,
which implies that $\upsilon$ is flat, and thus a contradiction.

To check this, note that $r_j/\sigma_j\to\infty$: indeed, if we had $r_j/\sigma_j\to \gamma\in[1,\infty)$
along a subsequence, then we would have $\zeta=S_\gamma\upsilon$ (as $\mu_{r_j}=S_{r_j/\sigma_j}\mu_{\sigma_j}$), which is impossible since $\zeta$ is flat while $\upsilon$ is not. As a consequence, we indeed have
$$\mathcal{F}(S_R\upsilon)=\lim_{j\to\infty}\mathcal{F}(S_R\mu_{\sigma_j})=\lim_{j\to\infty}\mathcal{F}(\mu_{R\sigma_j})\le\delta$$
for any $R\ge1$, since eventually $\sigma_j\le R\sigma_j<r_j$ and, by definition of $\sigma_j$,
$\mathcal{F}(\mu_t)\le\delta$ for all $t\in[\sigma_j,r_j]$.

We thus proved that all tangents at $x$ are flat. By the rectifiability criterion of Marstrand--Mattila from \cite{Marstr,Matt} (see also \cite[Chapter~5]{DeLellis} or \cite[Theorem~16.7]{MattilaBook}, or the argument at the end of the next subsection
together with the appendix), we conclude that $\mu$ is rectifiable.
\end{proof}

\subsection{Proof of Theorem \ref{preiss.bis}}
In this last part of the section, we just mention how the previous arguments need to be modified
in order to obtain Theorem \ref{preiss.bis}. First of all, we introduce a class of \emph{near-uniform} measures:
given any $\varepsilon>0$, we set
$$\U_\varepsilon^\alpha:=\{\nu\in\mathcal{M}(\R^n):0\in\supp\nu,\ \omega_\alpha r^\alpha\le\nu(B_r(x))\le(1+\varepsilon)\omega_\alpha r^\alpha\text{ for }x\in\supp\nu,\ r>0\}.$$
Let $\mathcal{G}$ be the set of good points where the analogues of properties (i)--(iii) above hold, namely:
\begin{itemize}
\item[(i')] $0<\theta(x):=\Theta_*^\alpha(\mu,x)\le\Theta^{\alpha,*}(\mu,x)<\infty$
with $\Theta^{\alpha,*}(\mu,x)\le(1+\epsilon_0)\Theta_*^{\alpha}(\mu,x)$;
\item[(ii')] the family $S_{x,r}\mu/\theta(x)$ is sequentially precompact as
$r\to0$ and the set $\mathcal{T}_x\mu$ of all possible limits $\lim_{j\to\infty}S_{x,r_j}\mu/\theta(x)$ satisfies
$$\mathcal{T}_x\mu\subseteq\U^\alpha_{\eps_0};$$
\item[(iii')] if $\nu\in\mathcal{T}_x\mu$ then, for any $y\in\supp\nu$ and $s>0$, also $S_{y,s}\nu\in\mathcal{T}_x\mu$ (same as (iii)).
\end{itemize}
Here, $\eps_0$ is the same constant appearing in \eqref{eq:preiss.bis}.
Again we have $\mu(\R^n\setminus\mathcal{G})=0$ and, without loss of generality, we can assume $\mu(\R^n)<\infty$.

The fact that $\alpha\in\N$ for uniform measures, together with a simple compactness argument,
yields $\varepsilon(\alpha,n)>0$ small enough such that, whenever $\nu\in\U_\varepsilon^\alpha$,
we must have $\alpha\in\N$. Thus, up to taking $\eps_0(\alpha,n)>0$ small enough,
we can assume that $\alpha=m\in\N$, and actually $m\in\{1,\dots,n-1\}$. The following is an analogue of \cite[Corollary~4.9]{Preiss}.

\begin{lemma}
Given any $\delta\in(0,\delta_0)$ (where $\delta_0(n)$ is given by Lemma \ref{lem:gap}),
up to taking $\eps_0$ small enough, we have $\mathcal{F}(\nu)\le\delta$ for all $x\in\mathcal{G}$ and $\nu\in\mathcal{T}_x\mu$.
\end{lemma}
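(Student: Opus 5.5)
Argue by contradiction and compactness, working at the level of near-uniform measures. Fix $\delta\in(0,\delta_0)$ and suppose the claim fails for every $\eps_0$. Then there are $\eps_j\to0$ and, for each $j$, a measure $\mu^{(j)}$ satisfying \eqref{eq:preiss.bis} with threshold $\eps_j$. Take a good point $x_j$ and a tangent $\nu_j\in\mathcal{T}_{x_j}\mu^{(j)}$ with $\mathcal{F}(\nu_j)>\delta$. By (ii')--(iii'), every $S_{y,s}\nu_j$ with $y\in\supp\nu_j$ and $s>0$ is again a tangent, hence lies in $\U^m_{\eps_j}$. The point is to exploit this whole family of rescalings, which lets us use a continuity argument like the one in the proof of Theorem~\ref{thm:preiss}, carried out inside the cone of tangents rather than along scales of $\mu$.

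First I would observe that flat measures are always present. For each $j$, the family $\{S_{y,s}\nu_j\}$ has uniform growth bounds $\omega_mr^m\le\cdot\le(1+\eps_j)\omega_mr^m$, so it is weakly precompact. Passing $j\to\infty$, any limit is $m$-uniform. The argument of Proposition~\ref{lem:flat-tangent} is robust under small $\eps$ by compactness: minimizing the span dimension over limits of tangent cones, I expect to find, for $j$ large, a tangent $\zeta_j\in\mathcal{T}_{x_j}\mu^{(j)}$ with $\mathcal F(\zeta_j)<\delta$. An alternative is to use that blow-ups at a.e.\ point of $\nu_j$ (tangents of tangents) converge, as $j\to\infty$, to uniform measures containing flat tangents. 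Next I would connect $\zeta_j=S_{y_j,r}\nu_j$-type rescalings to $\nu_j$ through the scale parameter. Using $t\mapsto\mathcal F(S_{t}\nu_j')$, which is continuous on the metrizable compact class of Remark~\ref{metrization}, I choose as before the last scale $\sigma_j$ below which $\mathcal{F}\le\delta'$, for a fixed $\delta'\in(\delta,\delta_0)$ or suitably between. This produces measures $\upsilon_j\in\U^m_{\eps_j}$ with $\mathcal{F}(\upsilon_j)=\delta'$ and $\mathcal{F}(S_R\upsilon_j)\le\delta'$ for all $R\in[1,T_j]$, where $T_j\to\infty$.

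Passing to the limit $j\to\infty$ gives $\upsilon\in\U^m$ with $\mathcal F(\upsilon)=\delta'$ and $\mathcal{F}(S_R\upsilon)\le\delta'<\delta_0$ for all $R\ge1$. Any blow-down $\lambda$ of $\upsilon$ therefore has $\mathcal F(\lambda)\le\delta'$, and by Lemma~\ref{lem:quadrics} it lies in $\mathcal Q$. Lemma~\ref{lem:gap} then makes $\lambda$ flat, and Corollary~\ref{cor:uniform-flat} makes $\upsilon$ flat. This contradicts $\mathcal F(\upsilon)=\delta'>0$.

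The main obstacle is the first step. We must guarantee, with $j$ fixed but large, that the tangent family contains a measure with $\mathcal F$ small at scales \emph{larger} than where $\nu_j$ is seen, so that the continuity argument runs in the right direction; we also need $T_j\to\infty$, which requires ruling out that the flat and nonflat scales stay comparable. For the ordering I would try to use the monotone structure of tangents: tangents of $\nu_j$ at a point are blow-ups $s\to0$, while $\nu_j$ itself is a blow-up of $\mu^{(j)}$. To connect small $\mathcal F$ at small scales with large $\mathcal F$ at scale one, I would use blow-ups at a.e.\ point of $\nu_j$ together with the uniform-limit version of Proposition~\ref{lem:flat-tangent}. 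Comparability of scales is excluded exactly as in the proof of Theorem~\ref{thm:preiss}: a flat limit cannot be a rescaling of a nonflat one.
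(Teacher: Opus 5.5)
Your proposal has a genuine gap at the ordering step, which you flag as the main obstacle but do not resolve, and the remedy you sketch goes the wrong way. The only rigidity available is that a uniform measure which is nearly flat at \emph{all large} scales is flat (Lemma~\ref{lem:quadrics}, Lemma~\ref{lem:gap} and Corollary~\ref{cor:uniform-flat}, applied to blow-downs). So the stopping-scale argument needs a nearly flat scale lying \emph{above} a non-flat one.

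Within the rescalings of a single tangent nothing provides this. The nearly flat tangent $\zeta_j$ from the compactness version of Proposition~\ref{lem:flat-tangent} is a priori unrelated to $\nu_j$. Nothing makes it equal to some $S_t\nu_j$, and a path $t\mapsto S_{y,t}\nu_j$ with $y\neq0$ does not pass through $\nu_j$ at all. The flat measures you can extract from $\nu_j$ itself are blow-ups at points of $\supp\nu_j$; this is what the proof of Proposition~\ref{lem:flat-tangent} and your ``blow-ups at a.e.\ point of $\nu_j$'' produce. They live at \emph{small} scales, while $\nu_j$ is non-flat at scale one. Your phrase ``connect small $\mathcal F$ at small scales with large $\mathcal F$ at scale one'' contradicts the requirement you correctly stated just before it. In that configuration the stopping argument yields a uniform $\upsilon$ with $\mathcal F(\upsilon)=\delta$ whose \emph{blow-ups} are nearly flat, and this is not contradictory. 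The Kowalski--Preiss cone, rescaled about a non-vertex point at the first scale where $\mathcal F$ reaches $\delta$, is exactly such a measure.

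The missing idea is to run the continuity argument along the scales $t\mapsto\mu_t=S_{x,t}\mu/\theta(x)$ of the original measure at the good point, as the paper does. There, both a nearly flat tangent $\zeta$ and the bad tangent $\xi$ are attained along scales tending to $0$. They can therefore be interleaved, $r_{k+1}<s_k<r_k$, and the correct ordering comes for free.

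The paper then concludes at a fixed $\eps_0$ using the compactness constants $L(\delta,n)$, $\tau(\delta,n)$, $\eps(\delta,n)$ from \eqref{delta}--\eqref{tau}. These force the stopping scale to satisfy $r_k\ge L\sigma_k$, and then yield a scale in $(\sigma_k,r_k)$ where $\mathcal F>\delta$, a contradiction.

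Your double limit $\eps_j\to0$ can also be salvaged on this basis, with two corrections:
\begin{itemize}
\item You need $\mathcal F(\zeta_j)\to0$, not merely $\mathcal F(\zeta_j)<\delta$; the Kuratowski upper-limit argument gives this along a subsequence. This is what excludes a bounded ratio $\gamma$ between the flat scale and the stopping scale, since such a ratio would give $S_\gamma\upsilon=\lim_j\zeta_j$ flat, hence $\upsilon$ flat.
\item The stopping threshold must be at most $\delta$. You only know $\mathcal F(\nu_j)>\delta$, so a threshold $\delta'\in(\delta,\delta_0)$ may never be reached.
\end{itemize}
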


\begin{proof}
Recall that, given $\delta\in(0,\delta_0)$, for an $m$-uniform measure $\nu\in\U^m$ we cannot have
$$\mathcal{F}(\nu)=\delta,\quad\mathcal{F}(S_R\nu)\le\delta\text{ for }R\ge1.$$
Another compactness argument yields $L(\delta,n)>1$ and $\eps(\delta,n)>0$ such that,
whenever $\nu\in\U_\eps^m$ and $\mathcal{F}(\nu)=\delta$, then
\begin{equation}\label{delta}
\mathcal{F}(S_R\nu)>\delta\quad\text{for some }R\in[1,L];
\end{equation}
for this $L$, by compactness again, there exists also $\tau(\delta,n)>0$ such that
\begin{equation}\label{tau}
\mathcal{F}(S_R\nu)>\tau\quad\text{for all }R\in[1,L],
\end{equation}
up to shrinking $\eps$ further.

Next, there exists a tangent measure $\zeta\in\mathcal{T}_x\mu$ such that $\mathcal{F}(\zeta)<\tau$ (again, up to shrinking $\eps$). Indeed, if we had a sequence of measures $\mu_j$ satisfying \eqref{eq:preiss.bis}
with parameters $\epsilon_j\to0$, but violating this conclusion at generic points $x_j$,
we could consider the set $\mathcal{T}$ given by the Kuratowski upper limit of the sets $\mathcal{T}_{x_j}\mu_j$ (i.e., the set of all possible subsequential limits of sequences $\nu_j\in\mathcal{T}_{x_j}\mu_j$).
Note that $\mathcal{T}\subseteq\U^m$ is compact and, whenever $\nu\in\mathcal{T}$, $y\in\supp\nu$, and $r>0$, we have $S_{y,r}\nu\in\mathcal{T}$. Hence, we could repeat the proof of Proposition \ref{lem:flat-tangent} and find a flat $\nu\in\mathcal{T}$. This would mean that, along a subsequence,
there exist $\nu_j\in\mathcal{T}_{x_j}\mu_j$ such that $\nu_j\weak\nu$, and in particular
$\mathcal{F}(\nu_j)\to\mathcal{F}(\nu)=0$, a contradiction, proving the claim.

Finally, we claim that, as a consequence of the previous observations, all tangent measures $\nu\in\mathcal{T}_x\mu$ have $\mathcal{F}(\nu)\le\delta$.
If this failed then, using the notation from the previous proof, we could find sequences $r_j,s_j\to0$
with $r_{j+1}<s_j<r_j$, $\mathcal{F}(\mu_{r_j})<\tau$, and $\mathcal{F}(\mu_{s_j})>\delta$,
since there would exist two tangent measures $\zeta,\xi$ where $\mathcal{F}(\zeta)<\tau$ and $\mathcal{F}(\xi)>\delta$, respectively. Taking $\sigma_j\in(s_j,r_j)$ as in the previous proof,
for $j$ large enough we should have $\mathcal{F}(\mu_{R\sigma_j})>\tau$ for all $R\in[1,L]$,
since otherwise we would contradict \eqref{tau} in the limit $j\to\infty$; in particular, $r_j\ge L\sigma_j$.

Thus, by \eqref{delta}, for $j$ large enough there would exist $R\in[1,L]$ (depending on $j$) such that
$\mathcal{F}(\mu_{R\sigma_j})>\delta$, contradicting the definition of $\sigma_j$ since $R\sigma_j\in(\sigma_j,r_j)$.
\end{proof}

Consider now any compact, purely unrectifiable set $F\subseteq\mathcal{G}$. It suffices to show that $\mu(F)=0$, which we will do by invoking a known rectifiability criterion, namely Proposition \ref{thm:Cprime}, proved in the appendix for the sake of completeness.

By standard measure theory, up to shrinking $F$ to another compact set (with positive measure if $\mu(F)>0$), we can assume without loss of generality that $\theta(x)\in[t,3t/2]$ as $x\in F$ varies,
for some uniform $t>0$, so that $\Theta_*^m(\mu,x)\ge t$ and $\Theta^{m,*}(\mu,x)\le 2t$,
and actually we can further assume that
\begin{equation}\label{growth.bds}
t/2\le\frac{\mu(B_r(x))}{\omega_m r^m}\le 4t\quad\text{for }x\in F,\ r\in(0,r_0),
\end{equation}
as well as $\mu(B_r(x))\le C_0r^m$ for all $x\in F$ and $r>0$ (for uniform $r_0,C_0>0$).
In particular, $\mu_{x,r}:=S_{x,r}\mu/\theta(x)$ belongs to the class $\mathcal{M}_{C_0/t}^m$ introduced in Remark \ref{metrization}, so that we can fix a distance $\mathfrak{d}$ metrizing the weak-* topology here. Up to shrinking $F$ (and $r_0$) further, we can also assume that convergence to tangent measures is uniform in $x$, namely that
\begin{equation}\label{delta.close}
\mathfrak{d}(\mu_{x,r},\mathcal{T}_x\mu)\le\delta\quad\text{for }x\in F,\ r\in(0,r_0).
\end{equation}
In the rest of the proof, we will tacitly
assume that $\delta$ (and thus $\eps_0$ in \eqref{eq:preiss.bis} and $r_0$ in \eqref{delta.close}) is small enough, so that the corresponding conclusions of each subsequent step hold true.

Fix $\eta<c(m)/8$ in the statement of Proposition \ref{thm:Cprime}. Note that its assumption \eqref{eq:growth} follows immediately from \eqref{growth.bds}.
Moreover, since $\mathcal{F}(\nu)\le\delta$ for all $\nu\in\mathcal{T}_x\mu$,
using \eqref{delta.close} and a compactness argument (in which $\delta\to0$) we see that $\mu_{x,r}$ is close to a suitable flat measure $\HH^m\res V$.
In particular, \eqref{eq:tube} holds for any $G\subseteq F$, since any point $y$
in $F\cap B_{2r}(x)$ outside of $B_{\eta r}(V+x)$ would have $\mu(B_{\eta r/2}(y))\ge t\cdot\frac{\omega_m\eta^m}{2^{m+1}}r^m$,
which is impossible in the limit $\delta\to0$ (since $(\HH^m\res V)(B_{\eta/2}(z))=0$ whenever $z\not\in B_\eta(V)$).

By standard differentiation of measures, we can now find a compact $G\subseteq F$, with $\mu(G)\ge\mu(F)/2$, such that $$\mu(B_{2r}(x)\setminus F)<t\cdot\omega_m\eta^m r^m/2\quad\text{for }x\in G,\ r\in(0,r_1),$$
for a suitable $r_1\in(0,r_0)$. It follows that \eqref{eq:filling} holds, up to replacing $r_0$ with $r_1$: indeed, given $x\in G$ and $r\in(0,r_1)$, for any $v\in V\cap B_1(0)$ we have $\mu(B_{\eta r}(x+rv))>t\cdot\omega_m\eta^m r^m/2$ (since in the limit $(\HH^m\res V)(B_{\eta}(v))=\omega_m\eta^m$) and $B_{\eta r}(x+rv)\subseteq B_{2r}(x)$. By our choice of $G$, we must have $B_{\eta r}(x+rv)\cap F\neq\emptyset$, i.e., $x+rv\in B_{\eta r}(F)$, as claimed.
Proposition \ref{thm:Cprime} then gives $\mu(G)=0$, and hence $\mu(F)=0$, as desired.

\section{The quantitative width estimate}\label{sec:quantitative}
We now prove Theorem~\ref{thm:width} for finite
$\mathcal D(\mu)$. As usual, we let $\Sigma:=\supp\mu$ and assume without loss of generality that $\mu\neq0$.
Recalling the assumption that a blow-down is flat, namely $\Hh^m\res V$, Lemma~\ref{lem:tube} then shows that
$\pi_{V^\perp}\Sigma$ is bounded. Letting
$$K:=\overline{\pi_{V^\perp}\Sigma},$$
let $w\ge0$ be the smallest radius of a ball $\bar B_w(q)\supseteq K$ in $V^\perp$.
Up to a translation in $V^\perp$ and in $V$, we can assume that $q=0$ and $0\in \pi_V\Sigma$;
let $\hat q_0\in V^\perp$ be such that $\hat x_0:=(0,\hat q_0)\in\supp\mu$, where we identify $\R^n=V\times V^\perp$.

Since the claimed bound trivially holds for $w=0$ and is easily seen to be scale-invariant, we can assume from now on that $w=1$.
Given $R>0$, let $\beta_R$ be the probability measure on $K$ given by
\[
 \int_K\psi\,d\beta_R:=
 \frac{(4\pi R^2)^{-m/2}}{G_R^\mu(\hat x_0)}
 \int_{\R^n} \psi(q)e^{-(|p|^2+|q-\hat q_0|^2)/(4R^2)}\,d\mu(p,q),
\]
where we write points $x\in\R^n$ as $(p,q)\in V\times V^\perp$.
Let $\beta$ be any subsequential limit of $\beta_R$ as $R\to\infty$.

As shown in the proof of Lemma \ref{lem:tube},
there exists a vector $b\in V^\perp$, actually given by the center of mass of $\beta$,
such that a variant of \eqref{eq:approximate-cylinder} holds: namely, translating $\mu$ by $-\hat x_0$,
we obtain a measure $\nu$ with $0\in\supp\nu$, so that \eqref{eq:quadratic-expansion} reads
$$4R_j^2(G_{R_j}^\nu(y)-G_{R_j}^\nu(0))=2\E_{R_j}^\nu[z]\cdot y+y\cdot Q_{R_j}^\nu y-|y|^2\E_{R_j}^\nu[1]+o_y(1).$$
Noting that $\E_{R_j}^\nu[z]\to b-\hat q_0$, $Q_{R_j}^\nu\to\pi_V$, and $\E_{R_j}^\nu[1]\to1$, this is the same as
$$4R_j^2(G_{R_j}^\mu(x)-G_{R_j}^\mu(\hat x_0))
=2(b-\hat q_0)\cdot(x-\hat x_0)+(x-\hat x_0)\cdot\pi_V(x-\hat x_0)-|x-\hat x_0|^2+o_x(1),$$
and the right-hand side is simply $2(b-\hat q_0)\cdot(q-\hat q_0)-|q-\hat q_0|^2+o_x(1)$.
Taking the difference at two points $x',x''\in\Sigma$ and passing to the limit, we get
\begin{equation}\label{eq:quant-oscillation}
 \operatorname{osc}_{q\in K}(|q|^2-2b\cdot q)\le8\mathcal{D}(\mu).
\end{equation}

Recalling that $\bar B_1(0)$ is the smallest ball including $K$,
we see that
$$0\in\operatorname{co}(K\cap\partial B_1(0));$$
indeed, if this were not the case, we could find a unit vector $v\in V^\perp$
such that $\ip{q}{v}>0$ for all $q\in K\cap\partial B_1(0)$, which would easily imply
that $K$ is included in a ball of smaller radius, centered at $\eps v$, for $\eps>0$ small enough:
otherwise, there would exist $q_\eps\in K$ with $|q_\eps-\eps v|^2\ge1$, i.e., $-2\eps\langle q_\eps,v\rangle+\eps^2
\ge1-|q_\eps|^2\ge0$, giving a limiting $q_0\in K$ with $|q_0|=1$ and $\langle q_0,v\rangle\le0$, impossible.

Consequently, there exist finitely many $q_i\in K\cap\partial B_1(0)$ and $\alpha_i\ge0$ such that
$\sum_i\alpha_i=1$ and $\sum_i\alpha_i q_i=0$. This gives
$\sum_i\alpha_i(|q_i|^2-2b\cdot q_i)=1$, which, combined with \eqref{eq:quant-oscillation}, implies
\begin{equation}\label{q.bd}
||q|^2-2b\cdot q-1|\le\sum_i\alpha_i\cdot|(|q|^2-2b\cdot q)-(|q_i|^2-2b\cdot q_i)|\le8\mathcal{D}(\mu)\quad\text{for }q\in K.
\end{equation}
Integrating against $\beta$, we deduce that
\[
 1-8\mathcal{D}(\mu)\le\int_K (|q|^2-2b\cdot q)\,d\beta=\int_K |q|^2\,d\beta-2|b|^2
 \le 1-2|b|^2.
\]
Since the subsequential limit $\beta$ was arbitrary, we have shown that
\begin{equation}
 \limsup_{R\to\infty}\biggl\lvert\int_K q\,d\beta_R\biggr\rvert
 \le2\sqrt{\mathcal{D}(\mu)}.\label{eq:quant-barycenter}
\end{equation}
Recalling \eqref{q.bd}, we then get
\begin{equation}
 1-4\sqrt{\mathcal{D}(\mu)}-8\mathcal{D}(\mu)\le |q|^2\le1\quad\text{for }q\in K.\label{eq:quant-shell}
\end{equation}

We now repeat the horizontal rescaling from the proof of Theorem \ref{thm:zero}:
as in \eqref{eq:anisotropic-measures}, identifying $V$ with $\R^m=\R^m\times\{0\}\subset\R^n$,
we now let $\zeta_L(x):=(\pi_Vx/L,\pi_{V^\perp}x)$ (in other words, we no longer translate by $-b$ vertically)
and
\[
 \eta_L:=L^{-m}(\zeta_L)_*\mu,
  \quad \rho_L:=L^{-m}(\pi_V/L)_*\mu=(\pi_V)_*\eta_L.
\]
Letting $L\to\infty$, after a subsequence, by \eqref{eq:full-flat-limit} we have
\begin{equation}\label{eq:quant-young}
 \rho_L\weak dx',\quad \eta_L\weak dx'\otimes\kappa_{x'},
\end{equation}
where $\kappa_{x'}$ is a probability measure supported in $K\subseteq\bar B_1(0)$.
We also set
$$h(x'):=\int_{K} q\,d\kappa_{x'}(q),\quad
 \sigma(x'):=\int_K |q|^2\,d\kappa_{x'}(q).$$
By \eqref{eq:quant-shell}, we also have
\begin{equation}\label{eq:quant-moment-bounds}
 |h|\le 1,\quad 1-4\sqrt{\mathcal{D}(\mu)}-8\mathcal{D}(\mu)\le \sigma\le 1.
\end{equation}
As before, we define
\[
 g_{L,r}:=\PP_r\rho_L,\quad A_{L,r}:=L^2(g_{L,r}-1),\quad
 h_{L,r}(x_0')=\int_{\R^n} k_r(x'-x_0')q\,d\eta_L(x',q).
\]
For $(x_0',q_0)\in\supp\eta_L$, setting again
\[
 G_{L,r}(x_0',q_0):=\int_{\R^n} k_r(x'-x_0') e^{-|q-q_0|^2/(4L^2r^2)}\,d\eta_L(x',q)
 =G_{Lr}^\mu(Lx_0',q_0),
\]
we have $|L^2r^2(G_{L,r}-1)|\le \mathcal{D}(\mu)+o_r(1)$ by definition of $\mathcal{D}(\mu)$ (with an error which is uniform over the support center and compact subsets of $\{r>0\}$). Since $|q-q_0|\le2$ on $\supp\eta_L$,
while we can assume $\mathcal{D}(\mu)\le1$ without loss of generality, we see that \eqref{eq:net-interpolation}
is still valid, with the same proof. In turn, this allows us to extract limits $A_r=\lim_{L\to\infty}A_{L,r}$
(after a subsequence), locally smoothly in $(x',r)$, with
\begin{equation}\label{a.lb}A_r\ge -\mathcal{D}(\mu)/r^2,\quad \PP_sA_r=A_{\sqrt{s^2+r^2}}.\end{equation}
Again, we also have $h_{L,r}\to \PP_rh$.

The same computation used to get \eqref{eq:zero-expanded-row} now gives
\begin{align*}
 4r^2A_{L,r}(x_0')&=\int_{\R^n} k_r(x'-x_0')|q|^2\,d\eta_L(x',q)
 -2q_0\cdot h_{L,r}(x_0')+|q_0|^2g_{L,r}(x_0')\\&\quad+4L^2r^2(G_{L,r}(x_0',q_0)-1)+o_r(1).
\end{align*}
Passing to the limit as $L\to\infty$, we obtain
\begin{equation}\label{eq:quant-row}
 |4r^2A_r(x')-\PP_r \sigma(x')+2q\cdot \PP_rh(x')-|q|^2|\le4\mathcal{D}(\mu)
\end{equation}
for $(x',q)$ in the support of $dx'\otimes\kappa_{x'}$. By \eqref{eq:quant-shell} and
\eqref{eq:quant-moment-bounds}, we deduce that
\begin{equation}\label{eq:quant-spherical-row}
 |2r^2A_r(x')-1+q\cdot \PP_rh(x')|
 \le4\sqrt{\mathcal{D}(\mu)}+10\mathcal{D}(\mu)\quad\text{for }(x',q)\in\supp(dx'\otimes\kappa_{x'}).
\end{equation}
Moreover, \eqref{avg.norm.to.zero} is now replaced by
$$\limsup_{L\to\infty}|h_{L,r}(0)|=\limsup_{R\to\infty}\biggl\lvert\int_K q\,d\beta_R\biggr\rvert\le2\sqrt{\mathcal{D}(\mu)},$$
thanks to \eqref{eq:quant-barycenter}
(as before, factors such as $e^{-|q-q_0|^2/(4R^2)}$ disappear in the limit). Thus, we obtain
\begin{equation}\label{eq:quant-root-mean}
 |\PP_rh(0)|\le2\sqrt{\mathcal{D}(\mu)}.
\end{equation}
Since $\hat x_0=(0,\hat q_0)\in\supp\eta_L$ for every $L$ (and \eqref{eq:quant-spherical-row} holds also at limits of points in $\supp\eta_L$), using \eqref{eq:quant-spherical-row} and \eqref{eq:quant-root-mean} at this point yields
\begin{equation}\label{eq:quant-root-defect}
 \biggl\lvert A_r(0)-\frac{1}{2r^2}\biggr\rvert
 \le\frac{3\sqrt{\mathcal{D}(\mu)}+5\mathcal{D}(\mu)}{r^2}.
\end{equation}

Now assume by contradiction that the bound stated in Theorem~\ref{thm:width} is false.
We can then find measures $\mu_j$, normalized as above (in particular,
such that $\bar B_1(0)$ is the smallest ball including $\pi_{V^\perp}\supp\mu_j$), with
$\mathcal{D}(\mu_j)\to0$. For each of them, we construct functions $A_r^{(j)},h^{(j)}$ and measures $\kappa_{x'}^{(j)}$
as above (using a sequence of scales $L\to\infty$ which may depend on $j$).

Since the measures $dx'\otimes\kappa_{x'}^{(j)}$ are locally uniformly bounded, after a subsequence
they converge to a limit $dx'\otimes\kappa_{x'}$. Also, since $|h^{(j)}|\le1$, we can extract a limit $h$
(with respect to weak-* convergence in $L^\infty$), so that $\PP_rh^{(j)}\to\PP_rh$ smoothly,
and similarly we can take limits $A_r^{(j)}\to A_r$ locally smoothly in $(x',r)$, thanks to the bound \eqref{eq:quant-root-defect} and \eqref{a.lb}. By \eqref{a.lb} again, in the limit $j\to\infty$ we have
$$A_r\ge0,\quad \PP_sA_r=A_{\sqrt{s^2+r^2}},$$
while \eqref{eq:quant-root-mean} and \eqref{eq:quant-root-defect} give
$$\PP_rh(0)=0,\quad A_r(0)=\frac{1}{2r^2},$$
and \eqref{eq:quant-spherical-row} becomes
$$2r^2A_r(x')=1-q\cdot \PP_rh(x')\quad\text{for }(x',q)\in\supp(dx'\otimes\kappa_{x'}).$$

Finally, since $dx'\otimes d\kappa_{x'}^{(j)}$ is supported in $\R^m\times[\bar B_1^{n-m}(0)\setminus B_{1-\delta_j}^{n-m}(0)]$ by \eqref{eq:quant-shell}, where $\delta_j:=4\sqrt{\mathcal{D}(\mu_j)}+8\mathcal{D}(\mu_j)\to0$,
we see that the limit measure $dx'\otimes\kappa_{x'}$ is supported in $\R^m\times\partial B_1^{n-m}(0)$,
so that $\kappa_{x'}$ is supported on the unit sphere $\mathbb{S}:=\partial B_1^{n-m}(0)$ (up to negligible modification).
Also, given any $\psi\in C^0_c(\R^m)$, we have
$$\int_{\R^m}h\psi\,dx'=\lim_{j\to\infty}\int_{\R^m}h^{(j)}\psi\,dx'
=\lim_{j\to\infty}\int_{\R^n}q\psi(x')\,dx'\otimes d\kappa_{x'}^{(j)}(q)
=\int_{\R^n}q\psi(x')\,dx'\otimes d\kappa_{x'}(q),$$
proving that $h(x')=\int_{\mathbb{S}}q\,d\kappa_{x'}(q)$, i.e., $h(x')$ is the center of mass of $\kappa_{x'}$,
for a.e.\ $x'$.

We are thus under the assumptions of Corollary~\ref{lem:fiber} with
$M=1$; since the latter forces $M=0$, we obtain a contradiction.

\section{Appendix: a rectifiability criterion}

The following is a simpler, weaker version of \cite[Lemma~5.2]{Preiss} and \cite[Theorem~5.3]{Preiss}. Its proof adapts
arguments from the rectifiability criterion of Marstrand--Mattila, as presented in \cite[Chapter 5]{DeLellis}
and \cite[Chapter~16]{MattilaBook}.

\begin{proposition}\label{thm:Cprime}
Given integers $1\le m\le n-1$, there exists a constant $c(m)>0$ such that the following holds.
Let $\mu$ be a positive Radon measure on $\R^n$, let $G\subseteq F$ be
compact sets, with $G$ purely unrectifiable, and suppose that
\begin{equation}\label{eq:growth}
 a r^m\le\mu(B_r(y))\le b r^m
 \quad\text{for }y\in F,\ r\in(0,r_0),
\end{equation}
for uniform parameters $0<a\le b$ and $r_0>0$.
Suppose also that, for every $x\in G$ and $r\in(0,r_0)$, there exists an $m$-plane $V=V(x,r)$ such that
\begin{gather}
 F\cap B_{2r}(x)\subseteq B_{\eta r}(V+x),\label{eq:tube}\\
 (V+x)\cap B_r(x)\subseteq B_{\eta r}(F),\label{eq:filling}
\end{gather}
with $\eta<c\cdot(a/b)$. Then we have $\mu(G)=0$.
\end{proposition}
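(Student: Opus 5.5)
We argue along the lines of the Marstrand--Mattila criterion, by contradiction: assume $\mu(G)>0$ and produce a Lipschitz graph covering a set of positive $\mu$-measure inside $G$, contradicting pure unrectifiability. Two preliminary reductions come first. By \eqref{eq:growth}, $\mu\res F$ and $\HH^m\res F$ are comparable on $G$: standard density theorems give $\HH^m(G)>0$ exactly when $\mu(G)>0$. Since $F$ is compact, we may also shrink $G$ to a compact subset of positive measure and assume $\operatorname{diam}G<r_0/10$.

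\textbf{Step 1 (planes are stable across scales and points).} For $x\in G$ and $r<r_0/2$, compare $V(x,r)$ with $V(x,2r)$, and with $V(y,r)$ for $y\in G\cap B_r(x)$. Condition \eqref{eq:filling} for $V(x,r)$ gives points of $F$ within $\eta r$ of every point of $(V(x,r)+x)\cap B_r(x)$. In particular, $F$ contains $m+1$ points that are affinely independent in a quantitative way: after normalizing by $r$, the simplex they span has inradius $\gtrsim r$. Condition \eqref{eq:tube} for the other plane puts all these points in its $\eta\cdot 2r$-neighborhood. Consequently the two planes are $C(m)\eta$-close as linear subspaces, and the base points are compatible. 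Hence for $y\in F\cap B_r(x)$ we get $\dist(y-x,V(x,r))\le \eta r$, and the angle between $V(x,r)$ and $V(x,s)$ is at most $C\eta$ per dyadic step.

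\textbf{Step 2 (cone condition and a graph).} The angles can accumulate across scales, so Step 1 alone does not yield a single graph. To handle this, we fix a scale $r$ and a point $x_0\in G$ with $V_0:=V(x_0,r)$, and consider $G_0:=G\cap B_{r/4}(x_0)$. The aim is the two-point estimate
\[
|\pi_{V_0^\perp}(y-z)|\le C\eta\,|y-z|
\]
for all $y,z\in G_0$ in a suitable subset of positive measure. This would make $\pi_{V_0}$ bi-Lipschitz on that subset, which then lies in a Lipschitz graph over $V_0$ and is therefore rectifiable. The estimate is immediate when $|y-z|\sim r$ by \eqref{eq:tube}. At smaller scales $s=|y-z|$, the relevant plane is $V(y,s)$, and we need $V(y,s)$ to stay close to $V_0$.

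This is the main obstacle, and we plan to treat it as in the classical proofs (De Lellis, Chapter~5). Define the bad set $B_\theta:=\{y\in G_0:\angle(V(y,s),V_0)>\theta\text{ for some }s<r\}$. If this set carries most of the mass, we exploit \eqref{eq:filling} together with the density bounds: a tilted plane at a point $y$ at scale $s$ forces, via \eqref{eq:filling}, a portion of $F$ of size $\gtrsim a s^m$ lying off the $\eta r$-tube around $V_0$. Summing over a Besicovitch or Vitali family of such balls then either contradicts \eqref{eq:tube} at scale $r$ or bounds the mass of $B_\theta$ by a small multiple of $(b/a)\eta\,\mu(G_0)$ through a projection-counting argument. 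The requirement $\eta<c\,a/b$ is exactly what makes this multiple smaller than $1$.

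\textbf{Step 3 (conclusion).} On the complement of $B_\theta$ we have a positive-measure subset of $G$ contained in a Lipschitz graph over $V_0$. This contradicts the pure unrectifiability of $G$, and therefore $\mu(G)=0$.

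The delicate part is the covering/mass estimate in Step 2. If it does not go through directly, the fallback is the standard density-point argument: pick $x_0$ with $\mu(B_r(x_0)\setminus G)\ll\mu(B_r(x_0))$ and apply Step 1 iteratively only at points of $G$.
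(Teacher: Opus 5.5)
Your Step~2 carries the whole content of the proposition, and the mechanism you propose for it cannot work.

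\textbf{Why the proposed mechanism fails.} The alternative ``contradicts \eqref{eq:tube} at scale $r$'' is void. Indeed, \eqref{eq:tube} is a hypothesis. Moreover, a plane $V(y,s)$ tilted at a scale $s\lesssim\eta r$ only yields, via \eqref{eq:filling}, points of $F$ in $B_{(1+\eta)s}(y)$, and these remain in a $C\eta r$-neighbourhood of $V_0+x_0$. The other alternative, mass counting, only penalizes planes that are nearly \emph{vertical} with respect to $V_0$, i.e.\ planes on which $\pi_{V_0}$ has a singular value $\lesssim\eta$. Only then do the points produced by \eqref{eq:filling} pile up over a small region of $V_0$. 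A plane tilted by a moderate angle $\theta$ produces no excess mass: a Lipschitz graph of slope $\tan\theta$ is perfectly compatible with $as^m\le\mu(B_s(y))\le bs^m$.

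Hence your bad set $B_\theta$ cannot be estimated this way. The drift of $C\eta$ per dyadic scale that you correctly point out (Step~1 itself is fine) stays uncontrolled. Showing that most of $G_0$ is a Lipschitz graph over the single plane $V_0$ is a big-pieces-type statement, and your sketch provides no argument for it. Note also that in your scheme the pure unrectifiability of $G$ enters only at the very end, so it cannot help in Step~2.

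\textbf{The missing idea.} Use pure unrectifiability \emph{first}; this removes the need for any control of tilts across scales, and it is what the paper does.
\begin{itemize}
\item[(1)] Fix a plane $V$. At $\HH^m$-a.e.\ $y\in G$ there is $z\in G$ near $y$ with $|\pi_V(z-y)|<\eta|z-y|$; otherwise a positive-measure part of $G$ would lie on finitely many Lipschitz graphs over $V$.
\item[(2)] Then \eqref{eq:tube} at $y$ and scale $3|z-y|$ forces $V(y,3|z-y|)$ to contain a nearly vertical unit vector. So $G\cap B_{5r_y}(y)$ projects into a thin slab, and a Vitali covering gives $\HH^m(\pi_V[G\cap\bar B_\sigma(x)])\le C\eta\,\mu(B_{2\sigma}(x))$.
\item[(3)] At a density point $0$ of $G$, take $V:=V(0,\sigma)$. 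By (2), $\pi_V(G\cap\bar B_\sigma)$ misses a fixed fraction of the disk $V\cap B_{\sigma/4}$. Meanwhile \eqref{eq:filling} and the density of $G$ at $0$ force every hole to have radius $\rho_i<2\eta\sigma$.
\item[(4)] Holes with points of $F$ above them contribute little, again by density. Over a hole with no point of $F$ above it, let $z_i\in G$ be a nearest point. The plane $V(z_i,\rho_i/(8\eta))$ must be nearly vertical, since otherwise \eqref{eq:filling} would place a point of $F$ over the hole.
\item[(5)] Applying \eqref{eq:filling} along that vertical direction gives about $1/(32\eta)$ disjoint balls of radius $\rho_i/4$, centered at points of $F$. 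They all lie in $B_{\sigma/2}$ and in the cylinder $\pi_V^{-1}(V\cap B_{2\rho_i}(v_i))$, where $v_i$ is the center of the $i$-th hole.
\end{itemize}
Summing the lower bounds $a(\rho_i/4)^m$ from \eqref{eq:growth} over these balls yields $\mu(B_\sigma)\gtrsim a\sigma^m/\eta$, which contradicts $\mu(B_\sigma)\le b\sigma^m$. This is precisely where the assumption $\eta<c\,a/b$ is used.
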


Let us assume without loss of generality that $a=1$.

\begin{lemma}\label{lem:projection}
Given any $m$-plane $V$, for every $x\in G$ and $\sigma\in(0,r_0/3)$ we have
\begin{equation}\label{eq:projection}
 \HH^m(\pi_V[G\cap\overline B_\sigma(x)])
 \le C(m)\eta\cdot \mu(B_{2\sigma}(x)).
\end{equation}
\end{lemma}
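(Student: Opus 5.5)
The plan is to combine a Vitali covering argument with the fact that the Lipschitz-graph part of $G$ (relative to $V$) is negligible because $G$ is purely unrectifiable. Fix $V$, $x\in G$ and $\sigma\in(0,r_0/3)$. Fix a cone aperture $s>0$, to be chosen of order $\eta$, and define the cone
$$X(y):=\{z:|\pi_V(z-y)|\le s|z-y|\}.$$
This cone contains the directions that are nearly vertical, i.e.\ nearly in $V^\perp$. Split $G_\sigma:=G\cap\bar B_\sigma(x)$ into two sets:
\begin{itemize}
\item[(a)] the \emph{good} set of points $y$ for which some $\rho(y)>0$ satisfies $G\cap B_{\rho(y)}(y)\cap X(y)=\{y\}$;
\item[(b)] the remaining points, for which $G$ contains points $z\ne y$ in $X(y)$ arbitrarily close to $y$.
\end{itemize}

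For the good set (a), I would argue as in the Marstrand--Mattila proofs. Decompose (a) into countably many pieces of diameter less than $\rho$ on which $\rho(y)\ge\rho$. On each such piece, $\pi_V$ is injective with Lipschitz inverse, with constant about $1/s$, so the piece is contained in a Lipschitz graph over $V$. Since $G$ is purely unrectifiable, each piece has $\HH^m$ measure zero, and therefore so do their $\pi_V$-images. Hence the good part contributes nothing to the left-hand side of \eqref{eq:projection}.

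For the bad points $y$, define $r(y)$ as the supremum of those $r\le\sigma/10$ for which there is $z\in G\cap\bar B_r(y)\cap X(y)$ with $|z-y|\ge r/2$. Any $z\ne y$ in $G\cap X(y)$ at distance at most $\sigma/10$ is admissible for some $r$ with $r/2\le|z-y|\le r$, so $r(y)>0$. I then take a point $z$ with $|z-y|\in[r/2,r]$ where $r$ is close to $r(y)$, and apply \eqref{eq:tube} at the centre $y$ and scale $r':=5r<r_0$. This places both $y$ and $z$ in the $\eta r'$-tube around $V(y,r')+y$. The unit vector $(z-y)/|z-y|$ is then within $O(\eta)$ of a unit vector $u\in V(y,r')$ with $|\pi_Vu|\le s+O(\eta)$. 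It follows that the linear map $\pi_V|_{V(y,r')}$ has one singular value $O(s+\eta)$ and the others at most $1$. Consequently $\pi_V$ maps the set $B_{r'}(y)\cap B_{\eta r'}(V(y,r')+y)$ into a region of $V$ of $\HH^m$-measure at most $C(m)(s+\eta)r'^m$. Using \eqref{eq:tube}, this gives
$$\HH^m\bigl(\pi_V[G\cap B_{5r}(y)]\bigr)\le C(m)\eta\, r^m\le C(m)\eta\,\mu(B_r(y)),$$
where the last step is the lower bound in \eqref{eq:growth} with $a=1$, and $s$ has been taken equal to $\eta$.

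To conclude, apply the Vitali $5r$-covering lemma to the balls $B_{r}(y)$ over all bad $y\in G_\sigma$. This produces disjoint balls $B_{r_i}(y_i)$ such that the enlarged balls $B_{5r_i}(y_i)$ cover the bad set. The radii are at most $\sigma/10$ with centres in $\bar B_\sigma(x)$, so the disjoint balls lie in $B_{2\sigma}(x)$. Summing the previous estimate over $i$ and using disjointness yields \eqref{eq:projection}.

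The main obstacle is the linear-algebra step: making precise that a single nearly vertical direction in $V(y,r')$, together with the thin tube around it, forces a projected measure of order $(s+\eta)r^m$, with constants independent of $V$. A second point needing care is that the near-supremum choice of $r$ keeps $z$ at distance comparable to the covering radius.
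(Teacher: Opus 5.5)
Your proposal is correct and follows essentially the same route as the paper. Points of $G\cap\overline B_\sigma(x)$ with no nearby nearly vertical chord lie in countably many Lipschitz graphs over $V$, so they are $\HH^m$-null by pure unrectifiability. At every other point you apply \eqref{eq:tube} at a scale comparable to the chord length, show that $\pi_V$ restricted to the approximating plane has a singular value $O(\eta)$, bound the projected set by $C(m)\eta r^m\le C(m)\eta\,\mu(B_r(y))$ via \eqref{eq:growth}, and conclude with Vitali's covering lemma.

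The differences are cosmetic: you localize the cone condition with a radius $\rho(y)$ where the paper uses the fixed scale $\sigma$. Your near-supremum choice of $r$ is also unnecessary. Taking $r:=|z-y|$ for any admissible chord $z$ already works, since Vitali's lemma only requires the radii to be bounded.
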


\begin{proof}
Assuming without loss of generality that $\eta<1/100$,
note that for $\HH^m$-a.e.\ $y\in G\cap\overline B_\sigma(x)$ there exists $z\in G\cap B_\sigma(y)\setminus\{y\}$ such that
\begin{equation}\label{eq:vertical-chord}
 |\pi_V(z-y)|<\eta|z-y|.
\end{equation}
Indeed, the Borel set $S\subseteq G\cap\overline B_\sigma(x)$ of points $y$ where this fails cannot have $\HH^m(S)>0$, since otherwise we would have $|\pi_V(z-y)|\ge\eta|z-y|$ whenever $y,z\in S\cap B$, for any ball $B=B_{\sigma/2}(p)\subset\R^n$, implying that $S$ is included in a finite union of Lipschitz graphs over $V$; this contradicts the assumption that $G$ is purely unrectifiable.

Given any such $y\in G\cap\overline B_\sigma(x)$, taking $z\in B_{\sigma}(y)$ as in \eqref{eq:vertical-chord}
and setting $r_y:=|z-y|<\sigma$, we now apply \eqref{eq:tube} at $y$, with radius $r:=3r_y<r_0$. We obtain an $m$-plane $W$ such that $F\cap B_{6r_y}(y)\subseteq B_{3\eta r_y}(W+y)$ and thus, since $z\in G\subseteq F$,
there exists a unit vector $w\in W$ (approximating $(z-y)/r_y$) with $|\pi_Vw|\le C\eta$, for an absolute constant $C$.

In particular, the least singular value of $\pi_V|_W$ is at most $C\eta$.
Thus, considering the disk $D_y:=(W+y)\cap B_{5r_y}(y)$, the projection $\pi_V [B_{3\eta r_y}(D_y)]$
is included in a rectangle of diameter at most $C(m)r_y$, with one side of length at most $C\eta r_y$. Since
\[
 G\cap B_{5r_y}(y)\subseteq B_{3\eta r_y}(W+y)\cap B_{5r_y}(y)\subseteq B_{3\eta r_y}(D_y),
\]
recalling that $r_y^m\le\mu(B_{r_y}(y))$ (by \eqref{eq:growth} and $a=1$), we then get
\begin{equation}\label{eq:slab-volume}
 \HH^m(\pi_V[G\cap B_{5r_y}(y)])\le C(m)\eta r_y^m\le C(m)\eta\mu(B_{r_y}(y)).
\end{equation}
The claim follows by applying Vitali's covering lemma to the balls $\{B_{r_y}(y)\}$, as $y$
varies among the points such that \eqref{eq:vertical-chord} holds for some $z$ (its complement projects to a negligible set).
\end{proof}

\begin{proof}[Proof of Proposition~\ref{thm:Cprime}]
Assume by contradiction that $\mu(G)>0$. Let $x\in G$ be a $\mu$-density point for $G$;
up to a translation, we will assume that $x=0$ and write balls $B_r(0)$ simply as $B_r$.
The upper bound in \eqref{eq:growth} gives
\begin{equation}\label{eq:density-loss}
 \mu(B_{\sigma}\setminus G)=o(\sigma^m).
\end{equation}
We can then take $\sigma<r_0/3$ such that $\mu(B_{\sigma}\setminus G)<\min\{\eta^m,1/(4\cdot80^m)\}\sigma^m$.

Next, let $V:=V(0,\sigma)$ and, for any $v\in V$, let $D_r(v):=V\cap B_r(v)$ be the disk of center $v$ and radius $r$ in $V$. By Lemma~\ref{lem:projection}, the sets
\[
 K:=\pi_V(G\cap\overline B_\sigma),\quad D:=D_{\sigma/4}(0)
\]
contain $0$ and satisfy
\begin{equation}\label{eq:many-holes}
 \HH^m(D\setminus K)\ge\frac{\omega_m\sigma^m}{4^m}-C(m)\eta b\cdot (2\sigma)^m\ge\frac{\omega_m\sigma^m}{2\cdot 4^m},
\end{equation}
once we take $c(m)\le\omega_m/(C(m)2^{3m+1})$ in the statement.
Moreover, letting $$\rho:D\setminus K\to(0,\sigma/4),\quad\rho(v):=\dist(v,K),$$ we claim that
\begin{equation}\label{eq:hole-radius}
 \rho<2\eta\sigma.
\end{equation}
Indeed, for each $v\in D\setminus K$, \eqref{eq:filling} (with $x:=0$, $r:=\sigma$, and the plane $V$) yields some
$p\in F$ with $|p-v|<\eta\sigma$. Thus, $B_{\eta\sigma}(p)\subset B_\sigma$ projects into
$D_{2\eta\sigma}(v)$, which cannot be disjoint from $K$, as this would imply $B_{\eta\sigma}(p)\cap G=\emptyset$ and thus
$$(\eta\sigma)^m\le\mu(B_{\eta\sigma}(p))\le\mu(B_{\sigma}\setminus G),$$
contradicting our choice of $\sigma$. Since we just proved that $D_{2\eta\sigma}(v)$ must intersect $K$, \eqref{eq:hole-radius} follows.

Applying Vitali's covering lemma to $\{D_{2\rho(v)}(v)\mid v\in D\setminus K\}$, we get a disjoint family
$\{D_{2\rho_i}(v_i)\}_{i\in I}$ such that $D\setminus K\subseteq\bigcup_{i\in I}D_{10\rho_i}(v_i)$. Thus, by
\eqref{eq:many-holes}, we have
\begin{equation}\label{eq:total-holes}
 \sum_{i\in I}\rho_i^m\ge\frac{\sigma^m}{2\cdot40^m}.
\end{equation}
Denoting by $\mathcal C_s(v):=\pi_V^{-1}(D_s(v))$ the cylinder above $D_s(v)$, we now let
\[
 I':=\{i\in I:F\cap B_{\sigma/2}\cap \mathcal{C}_{\rho_i/2}(v_i)=\emptyset\}.
\]

The indices $i\not\in I'$ have a small contribution to the sum in \eqref{eq:total-holes}:
indeed, given $i\not\in I'$, taking $p_i\in F\cap B_{\sigma/2}\cap \mathcal{C}_{\rho_i/2}(v_i)$, we have
$B_{\rho_i/2}(p_i)\subseteq B_\sigma$. Since this ball projects into $D_{\rho_i}(v_i)$, it does not intersect $G$. Also, as $i\in I\setminus I'$ varies, these balls are disjoint, since the disks $D_{\rho_i}(v_i)$ are disjoint. We deduce that
\[
 \sum_{i\in I\setminus I'}\rho_i^m
 \le 2^m\sum_{i\in I\setminus I'}\mu(B_{\rho_i/2}(p_i))
 \le 2^m\mu(B_\sigma\setminus G)
 <\frac{\sigma^m}{4\cdot40^m},
\]
again thanks to our choice of $\sigma$. Consequently, recalling \eqref{eq:total-holes}, we have
\begin{equation}\label{eq:clean-holes}
 \sum_{i\in I'}\rho_i^m\ge\frac{\sigma^m}{4\cdot 40^m}.
\end{equation}

Fix now an index $i\in I'$ and choose $z_i\in G\cap\overline B_\sigma$ such that
$|\pi_Vz_i-v_i|=\rho_i$. By \eqref{eq:tube} we have
\[
 |z_i|\le|\pi_Vz_i|+\eta\sigma\le|v_i|+\rho_i+\eta\sigma\le(\sigma/4)+3\eta\sigma<\sigma/3.
\]
Let $W_i:=V(z_i,r_i)$, where $r_i:=\rho_i/(8\eta)<\sigma/4$.
The least singular value of $\pi_V|_{W_i}$ is at most $16\eta$: if not, then $w_i:=\pi_V|_{W_i}^{-1}(v_i-\pi_Vz_i)$ would have
\[
 \pi_V(z_i+w_i)=v_i,\quad
 |w_i|\le\frac{\rho_i}{16\eta}<\frac\sigma{8}.
\]
By \eqref{eq:filling} (at center $z_i$ and scale $r_i$), we could also find a point $p_i\in F\cap B_{\eta r_i}(z_i+w_i)$, thus in $F\cap B_{\sigma/2}\cap \mathcal{C}_{\rho_i/2}(v_i)$ (as it projects into $D_{\eta r_i}(v_i)$ and $\eta r_i=\rho_i/8<\eta\sigma/4$, so that $|p_i|\le\sigma/3+\sigma/8+\eta\sigma/4<\sigma/2$), contrary to the assumption $i\in I'$.
Hence, we can find a unit vector $\hat w_i\in W_i$ such that $|\pi_V\hat w_i|\le 16\eta$.

Let us take $N:=\lceil1/(32\eta)\rceil$ values $t_{i,1},\dots,t_{i,N}\in[0,r_i/4]$, with $|t_{i,k}-t_{i,k'}|\ge\rho_i$. For each $t_{i,k}$, \eqref{eq:filling} gives $q_{i,k}\in F$ with
\[
 |q_{i,k}-(z_i+t_{i,k}\hat w_i)|<\eta r_i=\rho_i/8.
\]
In particular, the balls $B_{\rho_i/4}(q_{i,k})$ are disjoint. Recalling that $|z_i|<\sigma/3$,
we have
$$|q_{i,k}|+\frac{\rho_i}{4}<|z_i|+|t_{i,k}|+\frac{\rho_i}{8}+\frac{\rho_i}{4}<\frac\sigma3+\frac{r_i}{4}+\rho_i<\frac\sigma2,$$
so that $B_{\rho_i/4}(q_{i,k})\subset B_{\sigma/2}$. Also, recalling that $|\pi_Vz_i-v_i|=\rho_i$, we have
$$|\pi_Vq_{i,k}-v_i|+\frac{\rho_i}{4}<|\pi_Vz_i-v_i|+16\eta|t_{i,k}|+\frac{\rho_i}{8}+\frac{\rho_i}{4}
<\frac{3\rho_i}{2}+16\eta\cdot\frac{r_i}{4}=2\rho_i,$$
so that $B_{\rho_i/4}(q_{i,k})$ projects into $D_{2\rho_i}(v_i)$.
Thus, we have
$$B_{\rho_i/4}(q_{i,k})\subseteq B_{\sigma/2}\cap \mathcal{C}_{2\rho_i}(v_i),$$
which means that also the family $\{B_{\rho_i/4}(q_{i,k})\}_{i\in I',k}$ is disjoint, giving
\[
\frac{\sigma^m}{256\cdot160^m\eta}\le\sum_{i\in I'}\frac{\rho_i^m}{64\eta\cdot 4^m}\le \sum_{i\in I'}N(\rho_i/4)^m
\le\sum_{i\in I'}\sum_{k=1}^N\mu(B_{\rho_i/4}(q_{i,k}))\le\mu(B_\sigma)\le b\sigma^m
\]
by \eqref{eq:clean-holes}, a contradiction once we choose $c(m)$ small enough in the statement.
\end{proof}

\end{document}